\begin{document}
\baselineskip 16pt

	\newtheorem{theorem}{Theorem}[section]
	\newtheorem{proposition}[theorem]{Proposition}
	\newtheorem{coro}[theorem]{Corollary}
	\newtheorem{lemma}[theorem]{Lemma}
	\newtheorem{definition}[theorem]{Definition}	
	\newtheorem{assum}{Assumption}[section]
	\newtheorem{example}[theorem]{Example}
	\newtheorem{remark}[theorem]{Remark}
	\newcommand\Kone{K^{j,k}}
    \newcommand\Ktwo{\mathcal{K}^{j,k}}
    \newcommand\KKone{{\mathbb K}^{j,k}}
    \newcommand\KKtwo{\mathcal{B}^{j,k}}
	\newcommand\R{\mathbb{R}}
	\newcommand\RR{\mathbb{R}}
	\newcommand\CC{\mathbb{C}}
	\newcommand\NN{\mathbb{N}}
	\newcommand\ZZ{\mathbb{Z}}
	\def\RN {\mathbb{R}^n}

	\newcommand{\mc}{\mathcal}
	\newcommand\D{\mathcal{D}}	
	\newcommand{\supp}{{\rm supp}{\hspace{.05cm}}}
	\newcommand {\Rn}{{\mathbb{R}^{n}}}
	\newcommand {\rb}{\rangle}
	\newcommand {\lb}{{\langle}}
	\newtheorem{corollary}[theorem]{Corollary}
	\numberwithin{equation}{section}

\title[Singular spherical maximal operators ]
	{Singular spherical maximal operators on a class of degenerate two-step  nilpotent  Lie groups  }

	 \author[ N. Liu and L.X. Yan]{ Naijia Liu, \
 and \ Lixin Yan}

\address{Naijia Liu,
		Department of Mathematics,
		Sun Yat-sen University,
		Guangzhou, 510275,
		P.R.~China}
	\email{liunj@mail2.sysu.edu.cn}	
\address{
Lixin Yan, Department of Mathematics, Sun Yat-sen  University, Guangzhou, 510275, P.R. China}
\email{mcsylx@mail.sysu.edu.cn}

\date{\today}
 \subjclass[2010]{42B25, 22E30, 43A80}
\keywords{  Singular spherical maximal operator, degenerate two step nilpotent Lie groups, Gaussian curvature, oscillatory integrals. }

	\begin{abstract}
		Let $G\cong\mathbb{R}^{d} \ltimes \mathbb{R}$ be a finite-dimensional two-step nilpotent   group   with the group  multiplication
		$(x,u)\cdot(y,v)\rightarrow(x+y,u+v+x^{T}Jy)$ where $J$ is  a  skew-symmetric matrix  satisfying a degeneracy condition
		 with $2\leq {\rm rank}\, J  <d$.
				Consider the maximal function defined by
					$$
{\frak M}f(x, u)=\sup_{t>0}\big|\int_{\Sigma} f(x-ty, u- t x^{T}Jy) d\mu(y)\big|,
$$
where $\Sigma$ is a smooth convex hypersurface and  $d\mu$ is a compactly supported smooth density on $\Sigma$ such that
   the Gaussian curvature of $\Sigma$ is nonvanishing on $\supp d\mu$. In this paper
we prove that when  $d\geq 4$, the maximal operator ${\frak M}$ is bounded on $L^{p}(G)$ for the range $(d-1)/(d-2)<p\leq\infty$.
	\end{abstract}
	
\maketitle

\section{Introduction}
\setcounter{equation}{0}

Let $G$ be a finite-dimensional two-step nilpotent group which we may identify with its Lie algebra ${\frak g}$ by the exponential map.
We assume that $\frak g$ splits as a direct sum ${\frak g}={\frak w}\oplus {\frak z}  $ so that
\begin{align*}
 [{\frak w}, {\frak w}]\subset{\frak z},  \ \ \ \  [{\frak w},   {\frak z}]=\{0\},
\end{align*}
and that $\dim({\frak w})=d,$ $\dim({\frak z})=1$. In exponential coordinates $(x, u)\in \mathbb{R}^{d}\times\mathbb{R}$,
the group multiplication is given by
\begin{align*}
(x, u)\cdot(y, v):=(x+y, u+ v+x^{T}Jy),
\end{align*}
where     $J $ is a
  skew-symmetric matrix  acting on $\mathbb{R}^{d}$ (i.e. $J^T=-J$).
The most prominent examples are the Heisenberg groups ${\mathbb H}^n$
which arise when $d=2n$ and   $J={1\over 2} J_1$ with
\begin{equation}\label{e1.2}
J_1:=
\left(
\begin{array}{cccccc}
0 & -I_n \\
I_n & 0 \\
\end{array}
\right)
\end{equation}
is the standard symplectic matrix on ${\mathbb R}^{2n},$  see \cite{MS}.

 There is a natural dilation structure relative to ${\frak w}$ and ${\frak z}$, namely for $X\in {\frak w}$ and $U\in {\frak z}$
 we consider the dilations
$$
\delta_{t}:(X,U)\rightarrow(tX,t^{2}U).
$$
With the identification of the Lie algebra with the group, $\delta_{t}$ becomes an automorphism of the group.
Let $\Sigma$ be a smooth convex hypersurface in $\frak w$ and let $ \mu$ be a compactly supported smooth density on $\Sigma$.
 The Gaussian curvature of $\Sigma$ does not vanish on the support of $\mu$.  Define the dilate $ \mu_{t}$ by
\begin{equation}\label{e1.3}
\langle   \mu_{t}, f\rangle :=  \int_{\Sigma}  f(tx, 0) d\mu(x).
\end{equation}
Consider  the maximal operator
\begin{equation}\label{e1.4}
{\mathfrak M}f(x, u) :=\sup_{t>0}\big|f\ast  \mu_{t}(x, u) \big|,
\end{equation}
where the convolution is given by
\begin{equation}\label{e1.5}
f\ast  \mu_{t}(x, u):=\int_{\Sigma} f(x-ty, u-tx^{T}Jy)d\mu(y).
\end{equation}

The operator ${\mathfrak M}$ can be understood as an analogue of the classical (Euclidean) spherical maximal
function of Stein \cite{St1, St2} on ${\mathbb R^n}$ (see also Bourgain \cite{Bo}).
In the case of the sphere $\Sigma={\mathbb S}^{2n-1}=\{x: |x|=1\}$ on the noncentral part of
  the Heisenberg group ${\mathbb H}^n$
  and $\mu$ denotes the normalised surface measure on the sphere
 ${\mathbb S}^{2n-1}$, the maximal function \eqref{e1.4} was  studied
by Nevo and Thangavelu in \cite{NT},   and they obtained $L^{p}({\mathbb H}^n)$
boundedness for  $d=2n \geq 4$ and  $p>(d-1)/(d-2)$ by using spectral methods. Later, improving the results in \cite{NT},
   Narayanan and  Thangavelu \cite{NT1} obtained the optimal range   $p>d/(d-1)$   by modifying the argument in
   \cite{NT} and combining it with Stein's square function method.
   In \cite{MS}, M\"uller and Seeger  independently  proved this optimal range for $p$
   by using Fourier integral operators
   in a study which concerns more general surfaces $\Sigma$ with
   a nonvanishing rotational curvature  in ``non-degenerate" two-step nilpotent
 groups including all ${\mathbb H}$-type groups.  This topic has
 attracted a lot of attention in the last decades, and has been a very active research topic
 in harmonic analysis; see, for example  \cite{ACPS,  C, Fi, MS, MSS, NT1, PS1, Sc,  SS} and the references therein
 for further details.

 Throughout this paper, we aasume that  $G$ satisifes the degenerate hypothesis, i.e.,  for every nonzero linear
 functional $\omega\in {\frak z}^{\ast}$, the rank $k$ of the bilinear form
$\mathcal{J}_{\omega}:(X,Y)\rightarrow\omega\big([X,Y]\big)$, which maps ${\frak w}\times {\frak w}$ to $\mathbb{R}$, satisfies $0<k<d$.
Note in exponential coordinates the degenerate hypothesis is equivalent with
$$0< k=\text{rank}J<d,
$$
and   $d$ may be odd.
The skew symmetry of ${\mathcal J_{\omega}}$   implies that $\text{rank} J$ is even,
and so we can assume  that  $2\leq k<d$. In this paper,
we   prove the following result.

\begin{theorem}\label{th1.1}
Suppose $d\geq4$. Then the maximal operator ${\mathfrak M}$ extends to a bounded operator on $L^{p}(G)$ if $p>(d-1)/(d-2)$,
and there exists a constant $C>0$ such that
$$
\|{\mathfrak M}f \|_{L^p(G)}\leq C \|f\|_{L^p(G)}, \ \ \  1<p<\infty.
$$
As a consequence,
$
\lim\limits_{t\to 0} f\ast  \mu_{t}(x, u) =c f(x, u)
$
a.e.
for all $f\in L^p(G)$,  with $c=\int d\mu.$
\end{theorem}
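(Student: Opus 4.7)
\emph{Proof plan.} The plan is to take the partial Fourier transform in the central variable $u$, thereby reducing $\mathfrak{M}$ to a $\tau$-parameterized family of oscillatory integral operators on $\mathbb{R}^{d}$, and to analyze these uniformly in $\tau$ by combining stationary phase (which exploits the non-degenerate directions of $J$) with the classical Euclidean spherical maximal theorem of Stein--Bourgain (to cover the degenerate directions along $\ker J$).

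Setting $\tilde f(x,\tau)=\int_{\mathbb{R}}f(x,u)e^{-iu\tau}\,du$, a direct computation from \eqref{e1.5} yields
$$\widetilde{f\ast\mu_{t}}(x,\tau)=T_{t,\tau}\tilde f(\cdot,\tau)(x), \qquad T_{t,\tau}g(x):=\int_{\Sigma}g(x-ty)\,e^{-it\tau x^{T}Jy}\,d\mu(y),$$
so that, by Plancherel in $u$, the $L^{2}(G)$-boundedness of $\mathfrak{M}$ is equivalent to a uniform-in-$\tau$ bound for $\sup_{t>0}|T_{t,\tau}g|$ on $L^{2}(\mathbb{R}^{d})$. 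I would then perform a smooth dyadic decomposition $|\tau|\sim 2^{j}$ (coupled to a $t\sim 2^{\ell}$ decomposition when needed), obtaining frequency-localized maximal pieces $\mathfrak{M}_{j}$; the target is an $L^{2}$-decay estimate $\|\mathfrak{M}_{j}f\|_{2}\leq C\,2^{-j\sigma}\|f\|_{2}$ for some $\sigma>0$, to be interpolated against the trivial $L^{\infty}$-bound and summed in $j$ to yield $L^{p}$ boundedness for $p>(d-1)/(d-2)$.

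For the oscillatory part, split $\mathbb{R}^{d}=V_{1}\oplus V_{2}$ with $V_{2}=\ker J$, $\dim V_{1}=k$, $\dim V_{2}=d-k$. Since $J$ is skew-symmetric and $V_{2}=\ker J$, one checks that $x^{T}Jy=x_{1}^{T}Jy_{1}$ for $x=(x_{1},x_{2})$, $y=(y_{1},y_{2})$, so the twist only genuinely oscillates in the $x_{1}$-direction. In the region $|x_{1}|\gtrsim 1$ the $y$-gradient of the phase is $-t\tau J^{T}x_{1}$, of size comparable to $|t\tau|$; combining stationary phase along this gradient with the Fourier decay $|\hat\mu(\xi)|\lesssim(1+|\xi|)^{-(d-1)/2}$ coming from the non-vanishing curvature of $\Sigma$ produces the required $2^{-j\sigma}$-decay on this non-degenerate piece, after a standard linearization of the supremum in $t$ via a Sobolev embedding in the $t$-variable applied to $T_{t,\tau}$ and $\partial_{t}T_{t,\tau}$.

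The main obstacle---and the reason for the range $(d-1)/(d-2)$ rather than the non-degenerate $d/(d-1)$ of \cite{MS,NT1}---is the degenerate region $|x_{1}|\lesssim 1$, where the twist contributes no oscillation and the $\tau$-gain disappears. There I would introduce a further dyadic decomposition in $|x_{1}|$, trivialize the phase on the smallest scale, and dominate the surviving piece by the classical Euclidean spherical maximal operator acting in an effective dimension $d-1$; its Stein--Bourgain $L^{p}$-boundedness requires $d-1\geq 3$ and holds precisely for $p>(d-1)/(d-2)$, which both forces the hypothesis $d\geq 4$ and produces the stated range. Gluing the degenerate and non-degenerate pieces via a real or analytic interpolation (in the spirit of the square-function arguments of \cite{NT1,MS}) and summing in $j$ then completes the maximal inequality, and the almost-everywhere pointwise convergence follows by the standard density argument using $C_{c}(G)$.
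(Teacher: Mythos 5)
Your reduction via the partial Fourier transform in $u$ to the twisted averages $T_{t,\tau}g(x)=\int_{\Sigma}g(x-ty)e^{-it\tau x^{T}Jy}\,d\mu(y)$ is the right starting point and is essentially what the paper does (in the guise of \eqref{e4.24}), but the proposal has two genuine gaps. First, the interpolation scheme cannot reach the claimed range: an $L^{2}$-decay estimate for the dyadic pieces interpolated against the trivial $L^{\infty}$ bound only yields $L^{p}$ for $p\geq 2$, whereas $(d-1)/(d-2)<2$ for $d\geq 4$. You need an endpoint \emph{below} $L^{2}$ with controlled growth in the frequency parameters --- the paper proves a weak-type $(1,1)$ bound of size $C(j+k)2^{j}$ for each piece (Lemma~\ref{le4.4}) via a H\"ormander-type kernel condition adapted to the group structure, and interpolates that with the $L^{2}$ decay. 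Without such an $L^{1}$-side estimate the argument stops at $p=2$. (Also note that ``equivalent by Plancherel'' is too strong: $\sup_{t}$ does not commute with $\mathcal{F}_{u}$, so one must first run the Sobolev-in-$t$/square-function device and Cotlar--Stein in the dyadic $t$-blocks, and only then apply Plancherel at fixed $t$; you allude to this, but the needed bounds are on the $t$-localized square functions of $T_{t,\tau}$ and $\partial_{t}T_{t,\tau}$, not on $\sup_{t}|T_{t,\tau}|$ itself.)

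Second, the diagnosis of where the exponent $(d-1)/(d-2)$ comes from is not correct, and the proposed remedy for the ``degenerate region'' does not work. Where $Jx$ is small the twist simply stops oscillating and $T_{t,\tau}$ reduces to the untwisted Euclidean average over the full $(d-1)$-dimensional hypersurface $\Sigma\subset\mathbb{R}^{d}$, whose maximal operator is bounded on the \emph{better} range $p>d/(d-1)$; there is no step that dominates this piece by a spherical maximal operator in ``effective dimension $d-1$'', and no such domination is needed. The actual loss occurs in frequency space, in the regime where the central frequency $|\tau|$ dominates the surface (co-normal) frequency $|\sigma|$ \emph{and} the normal direction of $\Sigma$ lies in the non-degenerate block of $J$ (the paper's case (1), $x_{1}=\Gamma_{1}(x')$): there the $(d+1)$-dimensional rotational-curvature determinant $(\sigma\eta)^{d}\det\partial^{2}\Gamma_{1}+O(\lambda_{2}/\lambda_{1})$ is no longer bounded below, and one falls back on the full-rank estimate for $\sigma P_{d-1}\partial^{2}\Gamma_{1}P_{d-1}^{T}+aJ$ (Lemma~\ref{le3.1}(ii)), which after the extra $\sigma$-integration yields decay $2^{-(d-2)j/2}$ instead of $2^{-(d-1)j/2}$; this half-power loss is exactly what produces $p>(d-1)/(d-2)$ and the requirement $d\geq 4$. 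Your plan, which ignores the resonance between the input frequency $\xi$ and the twist frequency $\tau Jx$ (the decay of $\widehat{\mu}(t\xi-t\tau Jx)$ fails precisely when $\xi\approx\tau Jx$) and instead localizes in the spatial variable $|x_{1}|$, does not isolate this mechanism, so the key estimates would not close. A double dyadic decomposition in both $\sigma$ and $\tau$, together with a matrix lemma of the type of Lemma~\ref{le3.1} quantifying the non-degeneracy of $Q+aJ$ for $Q$ positive semi-definite and $J$ skew of rank $k<d$, is the missing ingredient.
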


We remark that $p>d/(d-1)$ is necessary in Theorem~\ref{th1.1} and it can be seen by testing ${\frak M}$ on the function
		$f$ given by $f(y,v)= |y|^{1-d}(\log|y|)^{-1}\chi(y,v)$  with a suitable nonnegative
 cutoff function $\chi$, which equals to $1$ if $|(y,v)|\leq \frac{1}{2}$ and $\supp \chi\subseteq B(0,1)$ (see \cite[Chapter XI]{St2} and Section 2 below).
 However, it is not clear for us to establish  $L^p$ boundedness   for range
 $d/(d-1)<p\leq (d-1)/(d-2)$ for $ d\geq 4$.

This article is organized as follows. In Section $2$, we introduce some notations and preliminary lemmas.
We reduce our main theorem to Theorem \ref{th2.1} below. In Section $3$, we prove a useful lemma, which plays an important
role in the proof of Theorem~\ref{th2.1}. The proof of Theorem~\ref{th2.1} will be given in Sections 4 and 5
by reducing the estimates for the averages to estimates for oscillatory integral operators.

Throughout, the letters ``$c$"  and ``$C$" will denote (possibly
different) constants  that are independent of the essential
variables.


\bigskip

\section{Preliminaries}
\setcounter{equation}{0}

The Fourier transform is defined for $f\in {\mathscr S(\mathbb R^n)}$ by
$$
{\hat f} (\xi)= \mathcal{F}(f)(\xi)= (2\pi)^{-n/2} \int_{\mathbb R^n} e^{-ix\xi} f(x)dx
$$
and this definition is then extended to $L^2(\mathbb R^n)$ and to bounded measure in the usual way.
The inverse Fourier transform  is denote by $\mathcal{F}^{-1}(f)={\check f}$.

We recall the definition of convolution
\begin{align*}
f\ast g(x,u) &=\int_{\mathbb R^{d+1}} f(y,v)g((y,v)^{-1}(x,u))dydv\\
&=\int_{\mathbb R^{d+1}} f(x-y,u-v-x^{T}Jy)g(y,v)dydv,
\end{align*}
where $J$ is skew-symmetric $d\times d$ matrix acting on $\mathbb{R}^{d}$ with $2\leq\text{rank} J<d$ by our degenerate hypothesis.
By a rotation we can suppose
\begin{equation}\label{e2.1}
J=
\left(
\begin{array}{cccccc}
J_{k} & 0 \\
0 & 0 \\
\end{array}
\right)
\end{equation}
for a skew-symmetric $k\times k$ matrix $J_{k}$ satisfies $\text{rank} J_{k}=k\in[2,d)$.

Recall that   $\Sigma$ is  a smooth convex hypersurface in $\frak w$ and let $\mu$ be a compactly supported smooth density on $\Sigma$.
 The Gaussian curvature of $\Sigma$ does not vanish on the support of $\mu$.
In the following,  we use notation
$$x=(x_{1}, x'), \ \  \ \ \ \ \ x'= (x_{2},...x_{d})\in \mathbb{R}^{d-1};
$$
 and also
 $$x=(x'',x_{d}), \ \  \ \ \ \ \   x''= (x_{1},...x_{d-1})\in \mathbb{R}^{d-1}.
 $$
By localizations,  we only consider that the projection of $\Sigma$ to $\frak w$ are given in
the following two cases since    other cases  can be studied similarly:

\begin{itemize}
\item[  (1)]   $x_{1}=\Gamma_1(x'),  x'= (x_{2},...x_{d})\ $ with  $\Gamma_1\in C^{\infty}(\mathbb R^{d-1})$
so that $\mu$ is supported in a small  neighborhood
of $(\Gamma_1(x'_{0}), x'_{0})$ for some $x'_{0}\in \R^{d-1}$;

\item[(2)]  $x_{d}=\Gamma_2(x''),  x''= (x_{1},...x_{d-1})\ $  with  $\Gamma_2\in C^{\infty}(\mathbb R^{d-1})$
so that  $\mu$ is supported in a small neighborhood of $(x_{0}'', \Gamma_2(x_{0}''))$ for some $x''_{0}\in \R^{d-1}$.
\end{itemize}

\smallskip

Using the Fourier inversion formula for Dirac measures we may write
\begin{eqnarray}\label{e2.2}
\mu_1(x, u)=\chi_{\mu_1}(x,u)\iint_{\mathbb R \times \mathbb R} e^{i(\sigma(x_1- \Gamma_1(x')) +\tau u)} d\sigma d\tau
\end{eqnarray}
and
\begin{eqnarray}\label{e2.3}
\mu_2(x, u)=\chi_{\mu_2}(x,u)\iint_{\mathbb R \times \mathbb R} e^{i(\sigma(x_d- \Gamma_2(x'')) +\tau u)} d\sigma d\tau
\end{eqnarray}
in the above cases (1) and (2), respectively. Here, $\chi_{\mu_i} ( i=1, 2) $  is a smooth compactly supported function and the integral convergence
 in the sense of oscillary integrals (thus in the sense of distributions).
Then Theorem~\ref{th1.1} follows from the following result:

\begin{theorem}\label{th2.1} Let $\mu_1$ and $\mu_2$ be given as above. Then we we have
\begin{itemize}
\item[(i)]  Suppose $d\geq 4$ and $p>(d-1)/(d-2)$. We have
\begin{align}\label{e2.5}
 \big\|\sup_{t>0}|f\ast ( \mu_1)_{t} |\big\|_{L^p(\mathbb R^{d+1})} \leq C \|f\|_{L^p(\mathbb R^{d+1})}.
\end{align}

\item[(ii)]   Suppose $d\geq 3$.  We have
\begin{align}\label{e2.4}
 \big\|\sup_{t>0}|f\ast ( \mu_2)_{t}  |\big\|_{L^p(\mathbb R^{d+1})} \leq C \|f\|_{L^p(\mathbb R^{d+1})}
\end{align}
holds  if and only if and $p> d/(d-1)$.
\end{itemize}
\end{theorem}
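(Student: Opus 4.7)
My approach would follow the oscillatory integral framework of M\"uller--Seeger~\cite{MS}, adapted to the present degenerate setting. To begin, using the fact that $\delta_t$ is a group automorphism, I would reduce the global maximal operator $\sup_{t>0}$ to a local version $\sup_{t\in[1,2]}$ by a standard dyadic decomposition in $t$ and rescaling argument. Then, via the Fourier--inversion representations \eqref{e2.2} and \eqref{e2.3}, I would write $\mu_i$ as an oscillatory integral in two phase variables $(\sigma,\tau)$ and take the Fourier transform in the central variable $u$ to decouple the $\tau$--integration; this converts the convolution $f\ast(\mu_i)_t$ into a one-parameter family of oscillatory integral operators on $\mathbb{R}^d$, carrying an additional twisting phase of the form $e^{-i\tau t x^{T}Jy}$.

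Next, I would perform a Littlewood--Paley decomposition in $\sigma$, so that $\sigma\sim 2^j$, and derive $L^2\to L^2$ norm estimates for each dyadic piece via stationary phase and H\"ormander's theorem on oscillatory integral operators. The nonvanishing Gaussian curvature of $\Sigma$ contributes a decay factor of order $2^{-j(d-1)/2}$ from the oscillation of the surface measure, while the bilinear form $x^{T}Jy$ contributes additional decay depending crucially on the relation between the parametrization of $\Sigma$ and the block structure \eqref{e2.1} of $J$. In case (ii), the height coordinate $x_d$ lies outside the range of $J$, so the twisting phase is independent of $y_d$; after integrating out the $y_d$ variable, the surviving oscillatory integral on the surface yields the full curvature decay, and interpolation with trivial $L^\infty$ bounds gives the optimal range $p>d/(d-1)$. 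In case (i), the height coordinate $x_1$ lies \emph{inside} the range of $J$ (since $k\ge 2$), so the twisting phase couples $y_1=\Gamma_1(y')$ with the remaining twisted variables; the mixed Hessian of the total phase acquires a nontrivial kernel at resonant scales where $|\sigma|$ and $|\tau t|$ are comparable, degrading the stationary phase decay and accounting for the weaker threshold $p>(d-1)/(d-2)$.

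To handle the supremum in $t$ I would use Stein's standard estimate
\[
\sup_{t\in[1,2]}|F(t)|^2\le |F(1)|^2+2\int_1^2 |F(t)|\,|F'(t)|\,dt,
\]
applied pointwise in $(x,u)$ and followed by Cauchy--Schwarz. This reduces the maximal bound to $L^p$ estimates for $f\ast(\mu_i)_t$ and for $f\ast\partial_t(\mu_i)_t$; the $t$--derivative contributes an extra polynomial factor in $2^j$, which is absorbed by the oscillatory decay when $j$ is large. Summing the dyadic pieces in $j$ and integrating over $\tau$ then yields the asserted bounds. The principal obstacle, which presumably motivates the lemma promised in Section~3, is the precise oscillatory integral estimate in case (i): one must analyze the combined phase at every relative scale of $\sigma$ and $\tau t$ and show that the decay is sharp at the boundary exponent $(d-1)/(d-2)$. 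The constraint $d\ge 4$ in part (i) should emerge from the dimensional interplay between the $k\ge 2$ twisted directions and the curvature of $\Sigma$, but the precise mechanism will only become apparent from the detailed phase analysis.
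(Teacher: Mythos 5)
Your outline captures the right analytic machinery for the $L^2$ theory (Fourier transform in the central variable, Littlewood--Paley decomposition, H\"ormander's oscillatory integral theorem, the curvature decay versus the twist decay, and Stein's square-function device for the supremum in $t$), and your structural explanation of why case (ii) recovers the full curvature decay while case (i) does not is essentially the correct heuristic. However, there is a genuine gap in how you propose to reach exponents $p<2$. You write that ``interpolation with trivial $L^\infty$ bounds gives the optimal range $p>d/(d-1)$,'' but interpolating an $L^2$ bound with an $L^\infty$ bound can only produce $p\geq 2$; since both claimed ranges $(d-1)/(d-2)$ and $d/(d-1)$ lie strictly below $2$, your argument as written proves nothing below $L^2$. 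The paper closes this by proving, for each dyadic piece $K^{j,k}$, a weak-type $(1,1)$ bound with a quantified loss $C(j+k)2^{j}$ (Lemmas \ref{le4.4} and \ref{le5.3}), obtained from a H\"ormander-type integral condition adapted to the group convolution and the nonisotropic quasi-norm $\rho(x,u)=|x|+|u|^{1/2}$. Interpolating that endpoint against the $L^2$ gain of each piece and then summing the geometric series in $j$ and $k$ is exactly what produces the thresholds $(d-1)/(d-2)$ and $d/(d-1)$; without some near-$L^1$ input of this kind the argument cannot close.

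A second, related omission is that you decompose dyadically only in $\sigma$ and propose to ``integrate over $\tau$'' at the end. Because $\mu_i$ is a Dirac mass in the central variable, its partial Fourier transform has no decay in $\tau$ a priori; the summability in the central frequency must be extracted from the twist $x^{T}Jy$ via the nondegeneracy of the $k\times k$ block $J_k$ (this is the content of Lemma \ref{le3.1}(i) and the $(1+|\lambda_2|)^{-\theta}$ factor in Lemmas \ref{le4.3} and \ref{le5.2}). The paper therefore decomposes in \emph{both} frequency variables, producing kernels $K^{j,k}$, and interpolates (in $\theta$) between the full $d\times d$ Hessian estimate and the $k\times k$ block estimate so that the double sum over $(j,k)$ converges; moreover the relative size of $2^j$ and $2^k$ dictates which of the two Hessian estimates is available. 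A single decomposition in $\sigma$ followed by integration in $\tau$ would diverge. Finally, note that the square-function inequality you quote is used in the paper only at $p=2$, in tandem with the Cotlar--Stein lemma to sum over the dyadic scales $2^n$ of $t$; for other $p$ the maximal bound again comes from interpolation with the weak $(1,1)$ estimate rather than from a direct $L^p$ square-function argument.
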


The proof of (i) and (ii) of Theorem~\ref{th2.1} will be given in Sections 4 and 5 below, respectively.
As pointed out in the introduction,  $p> d/(d-1)$ is necessary  in the above theorem
and it can be seen by testing ${\frak M}$ on the function
		$f$ given by $f(y,v)=|y|^{1-d}(\log|y|)^{-1}\chi(y,v)$ with a suitable
 cutoff function $\chi$, which equals to $1$ if $|(y,v)|\leq \frac{1}{2}$ and $\supp \chi\subseteq B(0,1)$.  Let $\mu$ be the induced Lebesgue measure on the sphere $S^{d-1}$.  Then we have
\begin{align*}
|f\ast \mu_{t}(x,u)|
&\geq-\int_{|y|=1,|x-ty|\leq\frac{|x|}{10}}|x-ty|^{1-d}(\log|x-ty|)^{-1}d\mu(y)\\
&\simeq -\int_{0}^{\frac{|x|}{10}}r^{1-d}(\log r)^{-1}dr=\infty
\end{align*}
if $|(x,u)|\leq  \big(10(\|J\|+1)\big)^{-1} $ and $t=|x|$, $d\geq4$. So ${\frak M}f=\sup_{t>0}|f\ast \sigma_{t}|\notin L^{p}(\R^{d+1})$
 for all $1\leq p<\infty$. However,
  $f\in L^{p}(\R^{d+1})$ for $1\leq  p \leq d/(d-1)$, so  ${\frak M}$ is unbounded on $L^p(\R^{d+1})$ for $1\leq p\leq d/(d-1)$.

In the end of this section, we state the following two lemmas, which will be useful in the proof of
Theorem~\ref{th2.1}.

\begin{lemma}\label{le2.2}
Suppose that
$$
\sup_{s\in [1,2]}\bigg(\sum_{n\in {\mathbb Z}} \big\|F_n(\cdot, s)\big\|_2^2\bigg)^{1/2}\leq A_1
$$
and
$$
\sup_{s\in [1,2]}\bigg(\sum_{n\in {\mathbb Z}} \left\|\frac{d}{ds}F_{n}(\cdot,s)\right\|_{2}^{2}\bigg)^{1/2}\leq A_2.
$$
Then
$$
\big\|\sup_n \sup_{s\in [1,2]} |F_n(\cdot, s)|\big\|_2\leq C(A_1 +\sqrt{A_1A_2}).
$$
 \end{lemma}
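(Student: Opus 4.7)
The plan is to use the one-dimensional Sobolev embedding on the interval $[1,2]$, combined with the $\ell^\infty \subset \ell^2$ inequality $\sup_n a_n \leq (\sum_n a_n^2)^{1/2}$, to reduce matters to the two given hypotheses via Cauchy--Schwarz. The whole argument is pointwise in the spatial variable $x$ until the very last step, where one integrates out and applies the assumptions.

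\smallskip

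The first step is the pointwise Sobolev-type identity. For each fixed $x$ and each $s, s_0 \in [1,2]$, the fundamental theorem of calculus gives
\begin{equation*}
|F_n(x,s)|^2 = |F_n(x,s_0)|^2 + 2 \int_{s_0}^{s} \mathrm{Re}\bigl(\overline{F_n(x,\sigma)}\, \partial_\sigma F_n(x,\sigma)\bigr)\, d\sigma.
\end{equation*}
Averaging in $s_0 \in [1,2]$ and then taking the supremum in $s$ yields
\begin{equation*}
\sup_{s\in[1,2]} |F_n(x,s)|^2 \leq \int_1^2 |F_n(x,\sigma)|^2\, d\sigma + 2\int_1^2 |F_n(x,\sigma)|\, |\partial_\sigma F_n(x,\sigma)|\, d\sigma.
\end{equation*}

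\smallskip

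The second step is to replace the supremum in $n$ by a sum and integrate in $x$. Using $\sup_n a_n^2 \leq \sum_n a_n^2$ and summing the previous inequality in $n$, then integrating in $x$, gives
\begin{equation*}
\bigl\|\sup_n \sup_{s\in[1,2]} |F_n(\cdot,s)|\bigr\|_2^2 \leq \int_1^2 \sum_n \|F_n(\cdot,\sigma)\|_2^2 \, d\sigma + 2\int_1^2 \sum_n \int |F_n(x,\sigma)|\,|\partial_\sigma F_n(x,\sigma)|\, dx\, d\sigma.
\end{equation*}
By the first hypothesis the first term is bounded by $A_1^2$. For the second term, I apply Cauchy--Schwarz first in $x$ and then in $n$:
\begin{equation*}
\sum_n \int |F_n(x,\sigma)|\,|\partial_\sigma F_n(x,\sigma)|\, dx \leq \Bigl(\sum_n \|F_n(\cdot,\sigma)\|_2^2\Bigr)^{1/2}\Bigl(\sum_n \|\partial_\sigma F_n(\cdot,\sigma)\|_2^2\Bigr)^{1/2} \leq A_1 A_2.
\end{equation*}

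\smallskip

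Combining the estimates, $\|\sup_n \sup_{s\in[1,2]} |F_n|\|_2^2 \leq A_1^2 + 2 A_1 A_2$, and extracting the square root via $\sqrt{a^2+b} \leq a + \sqrt{b}$ gives the desired bound $A_1 + \sqrt{2 A_1 A_2} \leq C(A_1 + \sqrt{A_1 A_2})$. There is no real obstacle here: the Sobolev embedding $H^1([1,2]) \hookrightarrow L^\infty([1,2])$ in its integrated form combined with the Cauchy--Schwarz inequality in the index $n$ does the entire job. The only conceptual point worth flagging is the use of $\sup_n \leq (\sum_n)^{1/2}$ at the level of squares, which is wasteful in general but harmless here since the hypotheses control precisely the $\ell^2_n$-norms of the $L^2_x$-norms.
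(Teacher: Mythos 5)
Your argument is correct and is precisely the standard proof that the paper invokes by citation (\cite[Lemma 3.1]{MS}, \cite[p.~499]{St2}): the fundamental theorem of calculus applied to $|F_n(x,\cdot)|^2$, averaging in the base point, replacing $\sup_n$ by $\sum_n$, and Cauchy--Schwarz in $x$ and then in $n$. Nothing further is needed.
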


\begin{proof}
For the proof, see \cite[Lemma 3.1]{MS}, \cite[p. 499]{St2}.
\end{proof}

 We now state an almost orthogonality lemma, the Cotlar-Stein Lemma.

\begin{lemma}\label{le2.3}
Suppose $0<\epsilon<1, A\leq B/2$ and let $\{T_n\}_{n=1}^{\infty}$ be a sequence of bounded operators on a
Hilbert space $H$ so that the operator norms satisfy
$$
\|T_n\|\leq A
$$
and
$$
\|T_n T^{\ast}_{n'}\|\leq B^22^{-\epsilon|n-n'|}.
$$
Then for all $f\in H$
$$
\left(\sum_{n=1}^{\infty} \big\|T_n f\big\|^2\right)^{1/2}\leq CA \sqrt{\epsilon^{-1} {\rm log} (B/A)} \ \|f\|.
$$
\end{lemma}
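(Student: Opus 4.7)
The plan is to view the sequence $(T_n)_{n\geq 1}$ as a single operator $T\colon H \to \ell^{2}(\NN;H)$ sending $f$ to $(T_n f)_{n}$. The target quantity $(\sum_n \|T_n f\|^{2})^{1/2}$ is exactly $\|Tf\|_{\ell^{2}(\NN;H)}$, so it suffices to bound $\|T\|$. By the standard identity $\|T\|^{2} = \|TT^{\ast}\|$, the problem reduces to estimating the norm of the block operator $TT^{\ast}$ on $\ell^{2}(\NN;H)$, whose $(n,n')$-entry is the operator $T_n T_{n'}^{\ast}$.

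For any block matrix $M=(M_{n,n'})$ acting on $\ell^{2}(\NN;H)$, a direct Cauchy--Schwarz argument (Schur's test) yields $\|M\| \leq \sqrt{(\sup_n \sum_{n'}\|M_{n,n'}\|)(\sup_{n'} \sum_{n}\|M_{n,n'}\|)}$. With $M_{n,n'}=T_n T_{n'}^{\ast}$, the two suprema coincide by the symmetry $(T_n T_{n'}^{\ast})^{\ast}=T_{n'}T_n^{\ast}$, so it remains to bound $\Sigma:=\sup_n\sum_{n'}\|T_nT_{n'}^{\ast}\|$.

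The central step is to interpolate between the two available bounds: the trivial one $\|T_nT_{n'}^{\ast}\| \leq \|T_n\|\|T_{n'}\| \leq A^{2}$ and the hypothesis $\|T_nT_{n'}^{\ast}\| \leq B^{2} 2^{-\epsilon|n-n'|}$, giving $\|T_nT_{n'}^{\ast}\| \leq \min(A^{2}, B^{2} 2^{-\epsilon|n-n'|})$. These two sides agree near $|n-n'|=N$ for $N:=\lceil 2\epsilon^{-1}\log_{2}(B/A)\rceil$, so I would use the bound $A^{2}$ for $|n-n'|\leq N$ and the decay bound for $|n-n'|>N$. A geometric sum then gives
$\Sigma \leq (2N+1)A^{2} + 2\sum_{j>N}B^{2} 2^{-\epsilon j} \lesssim A^{2}\,\epsilon^{-1}\log(B/A),$
where the hypothesis $A\leq B/2$ ensures $\log(B/A)\geq \log 2$, so that the tail contribution (of order $A^{2}/\epsilon$) is absorbed into the main term. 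Combining with the reduction in the first step, $\|T\|^{2} = \|TT^{\ast}\| \lesssim A^{2}\epsilon^{-1}\log(B/A)$, which is the claim.

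I do not anticipate a genuine obstacle: this is a quantitative form of the Cotlar--Stein almost-orthogonality principle, and the only subtlety is the choice of the crossover $N$, which simultaneously extracts a logarithmic factor from the ``near-diagonal'' part and an $\epsilon^{-1}$ factor from the decaying tail. One also sees from this proof why only the bound on $T_nT_{n'}^{\ast}$ is needed, and not the companion bound on $T_n^{\ast}T_{n'}$: we are bounding the square function $\sum\|T_n f\|^{2}$ rather than the norm of $\sum T_n f$, so only $TT^{\ast}$ enters the computation.
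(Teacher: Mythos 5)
Your proof is correct. Note that the paper does not actually prove this lemma -- it simply cites Lemma~3.2 of M\"uller--Seeger \cite{MS} -- and your argument is essentially the standard one behind that reference: pass to the block operator $TT^{\ast}=(T_nT_{n'}^{\ast})$, apply the operator-valued Schur test with the interpolated bound $\min\bigl(A^{2},\,B^{2}2^{-\epsilon|n-n'|}\bigr)$, and place the crossover at $|n-n'|\sim \epsilon^{-1}\log_{2}(B/A)$ so that the near-diagonal block contributes $A^{2}\epsilon^{-1}\log(B/A)$ and the geometric tail contributes $O(A^{2}\epsilon^{-1})$, which is absorbed since $A\leq B/2$ forces $\log(B/A)\geq \log 2$. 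The only point worth making explicit is that one should run the argument for the finite truncations $f\mapsto (T_1f,\dots,T_Kf)$, where $T$ is trivially bounded and $\|T\|^{2}=\|TT^{\ast}\|$ is legitimate, and then let $K\to\infty$; your bound is uniform in $K$, so this is routine. Your closing remark is also accurate: for the square-function estimate only the $T_nT_{n'}^{\ast}$ hypothesis is needed, unlike the classical Cotlar--Stein bound for $\|\sum_n T_n\|$.
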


\begin{proof}
For the proof, see \cite[Lemma 3.2]{MS}.
\end{proof}


\bigskip

 \section{A useful  lemma}
\setcounter{equation}{0}

Let $m, n\in[1,d]$. For $m\times n$ matrix $A$, we define its norm $\|A\|$ as
$$
\|A\|=\sup_{X\in \Rn,|X|=1}|AX|,
$$
where $X=(x_{1},...,x_{n})$ and $|X|=\big(\sum_{i=1}^{n} x_{i}^{2}\big)^{1/2}$.
We define $P_{n}$, $E_{n}$ as the $d\times n$ matrices:
\begin{equation}\label{e3.1}
P_{n}=
\left(
\begin{array}{cccccc}
0 \\
I_{n} \\
\end{array}
\right)\ \text{and} \
E_{n}=
\left(
\begin{array}{cccccc}
I_{n} \\
0 \\
\end{array}
\right)
\end{equation}
and $P_{n}^{T}$, $E_{n}^{T}$ as their transposes.

 Recall that $J$ is skew-symmetric matrix acting on $\mathbb{R}^{d}$   with $2\leq\text{rank} J<d$.
By a rotation we can suppose $J$ has the form \eqref{e2.1}.

In this section, we first prove the following lemma, which plays an essential role in the proof
 of    (i) of Theorem~\ref{th2.1}.

\begin{lemma}\label{le3.1}
\begin{itemize}
\item[(i)]
Let $J_{k}$ be a $k\times k$ skew-symmetric matrix with rank$J_{k}= k\in [2, d)$. For every
  $k\times k$ semi-positive definite matrix $G$,  there exists a constant $c_0=c_0(J_k, G, M)>0$ such that
\begin{equation}\label{e3.3}
\inf_{ \substack{  |a|\leq M \\ X\in \mathbb{R}^{k}, \, |X|=1}} \left\|(aG+J_{k})X \right\|\geq c_0;
\end{equation}
\item[(ii)]
For every $(d-1)\times (d-1)$ positive definite matrix $Q$, there exists a constant $c_1=c_1(J, Q,\delta)>0$ such that
\begin{equation}\label{e3.4}
\inf_{\substack{ |a|\geq \delta\\ Y\in \mathbb{R}^{d}, \, |Y|=1}} \left\|(P_{d-1}QP_{d-1}^{T}+aJ)Y \right\|\geq c_1.
\end{equation}
\end{itemize}
\end{lemma}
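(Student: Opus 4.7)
The plan is to handle both parts of Lemma~\ref{le3.1} by a unified compactness/contradiction argument that exploits two structural facts in parallel: skew-symmetry of $J_k$ (and hence $J$) kills the diagonal form, so $X^T J_k X=0$; and $J_k$ has full rank $k$, so the only vectors annihilated by $J$ are those whose first $k$ coordinates vanish. Combining these with the (semi)definiteness of $G$ or $Q$ should suffice.

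For (i), I would assume the infimum in \eqref{e3.3} vanishes, extract sequences $a_n\in[-M,M]$ and unit vectors $X_n\in\R^k$ with $(a_n G+J_k)X_n\to 0$, and pass to limits $a_\infty$, $X_\infty$ (with $|X_\infty|=1$) satisfying $(a_\infty G+J_k)X_\infty=0$. Taking the inner product with $X_\infty$ and using $X_\infty^T J_k X_\infty=0$ yields $a_\infty X_\infty^T G X_\infty=0$. Either $a_\infty=0$, and then $J_k X_\infty=0$ contradicts $\mathrm{rank}\,J_k=k$; or $X_\infty^T GX_\infty=0$, and positive semi-definiteness of $G$ forces $GX_\infty=0$, again giving $J_k X_\infty=0$ and the same contradiction.

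For (ii), the same blueprint applies, but I would split into two cases according to whether the sequence $\{a_n\}$ is bounded. In the bounded case, a subsequential limit $(a_\infty, Y_\infty)$ with $|a_\infty|\geq\delta$ and $|Y_\infty|=1$ satisfies $(P_{d-1}QP_{d-1}^T+a_\infty J)Y_\infty=0$; pairing with $Y_\infty$ and invoking skew-symmetry of $J$ gives $(P_{d-1}^T Y_\infty)^T Q (P_{d-1}^T Y_\infty)=0$, so the positive-definiteness of $Q$ forces $P_{d-1}^T Y_\infty=0$, hence $Y_\infty=\pm e_1$. Plugging this back produces $a_\infty J e_1 =0$, impossible because $J e_1$ is the first column of $J$ and the invertibility of the block $J_k$ makes it nonzero. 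In the unbounded case $|a_n|\to\infty$, dividing the approximate equation by $a_n$ yields $J Y_n\to 0$, so the limit $Y_\infty$ lies in $\ker J=\{Y:y_1=\cdots=y_k=0\}$; the same inner-product step still gives $P_{d-1}^T Y_\infty=0$ and thus $Y_\infty=\pm e_1$, which conflicts with $Y_\infty\in \ker J$.

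I expect the argument to be essentially linear-algebraic with no analytic obstacle; the main bookkeeping point is aligning the two blocks---the symmetric matrix $P_{d-1}QP_{d-1}^T$ (acting on coordinates $2,\ldots,d$) and the skew-symmetric $J$ (acting on coordinates $1,\ldots,k$)---so that the quadratic-form identity and the kernel condition cannot be simultaneously satisfied. This is exactly where the invertibility of $J_k$ (guaranteed by $\mathrm{rank}\,J_k = k \geq 2$) does the real work.
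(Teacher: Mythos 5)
Your argument is correct: both parts are established by a clean compactness/contradiction scheme, and each step checks out. In (i) the set $\{|a|\le M\}\times S^{k-1}$ is compact, so a vanishing infimum produces an actual zero $(a_\infty G+J_k)X_\infty=0$; the quadratic-form identity $X_\infty^TJ_kX_\infty=0$ then forces $GX_\infty=0$ (via $G=B^TB$) or $a_\infty=0$, and in either case $J_kX_\infty=0$ contradicts invertibility of $J_k$. In (ii) your split into bounded and unbounded $\{a_n\}$ correctly compensates for the non-compactness of $\{|a|\ge\delta\}$, and the endgame ($P_{d-1}^TY_\infty=0$ forces $Y_\infty=\pm e_1$, while $JY_\infty=0$ forces the first $k$ coordinates to vanish, with $k\ge2$) is sound. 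This is, however, a genuinely different route from the paper's. The paper argues quantitatively throughout: it first proves the rescaled estimate $\inf_{|a|\ge\delta,|X|=1}\|(G+aJ_k)X\|\ge c_2$ with an explicit $c_2$ built from $\|J_k^{-1}\|$ and $\|B^T\|$, deduces (i) by the substitution $a\mapsto a^{-1}$ together with a perturbation bound for small $|a|$, and proves (ii) by splitting on the size of $\|P_{d-k}^TY\|$ (the last $d-k$ coordinates of $Y$) rather than on the size of $|a|$, using the quadratic form when that block is large and the full-rank block $J_k$ when it is small. What the paper's version buys is explicit constants with transparent dependence on $\|Q\|$, $\|A^{-1}\|$, $\|J_k^{-1}\|$; this matters in the application (Lemma~\ref{le4.3}), where $Q=\partial_{x'}^2\Gamma_1(x'-y')$ ranges over a compact family and the lower bound must be uniform in the base point. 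Your soft argument proves the lemma as stated for each fixed $G$ or $Q$, but to feed it into Section~4 you would need to rerun the compactness argument with $Q$ varying over the relevant compact set of positive definite matrices (a one-line modification, but worth stating).
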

\begin{proof}
Note that $G$ is a $k\times k$ semi-positive definite matrix, we can write $G=B^TB$ for some matrix $B$. First, let us show that
\begin{align}\label{e3.5}
\inf_{|a|\geq \delta,\, |X|=1}  \|(G+aJ_{k})X\|\geq c_2:=\min\Big\{ { \delta\| J_k^{-1}\|^{-1}\over 2 },
 {\delta^{2}\| J_k^{-1}\|^{-2} \over 4\|B^{T}\|^{2} } \Big\}.
\end{align}
We argue  \eqref{e3.5} by contradiction. Assume that
there exist some   $|a|\geq \delta$ and $|X|=1$ such that
$
\|(G+aJ_{k})X\|< c_2.
$
Since $J_{k}$  is a skew-symmetric matrix, we have
  $X^{T}J_{k}X =0$. Hence,
\begin{eqnarray*}
\|B X\|^2 = |X^{T}GX|  =  |X^T (G  + aJ_{k})X|  \leq \| (G + aJ_{k} ) X)\|<c_{2}.
\end{eqnarray*}
From this, we see that $\|GX\|\leq \|B^T\| \, \|B X\|<\|B^{T} \| \sqrt{c_{2}}$. By   assumption,
it tells us that
$$
\|J_{k}X\|\leq|a|^{-1} \big(\|(G+aJ_{k})X\|+\|GX\|\big)<\delta^{-1} \big( c_{2}+ \|B^{T} \|\sqrt{c_{2}} \big)\leq\| J_k^{-1}\|^{-1}.
$$
However, $\det J_{k}\neq0$ and  $
1=|X|= \|J_k^{-1} J_kX \|\leq  \| J_k^{-1}\|\, \| J_kX  \|
 $ and so $\| J_k X \|\geq   \| J_k^{-1}\|^{-1}.$
This will be our desired contradiction.

We now apply \eqref{e3.5} to prove \eqref{e3.3}.
 By \eqref{e3.5},
\begin{equation*}
\|(aG+J_{k})X\|=|a| \ \|(G+ a^{-1}J_{k})X\| \geq  |a|  c_2 \geq {c_2\| J_k^{-1}\|^{-1}\over c_2+ \|G\|}.
\end{equation*}
whenever $ \| J_k^{-1}\|^{-1}/ (c_2+ \|G\|) \leq |a|\leq M$. On the other hand,
we use estimate  $\|aGX\|\leq |a|\, \|G\| $ to get that for
  $|a|\leq \| J_k^{-1}\|^{-1}/ (c_2+ \|G\|) $,
 $$\|(aG+J_{k})X\|\geq \|J_{k}X\|- \|aGX\|\geq  \| J_k^{-1}\|^{-1}-  |a|\, \|G\| \geq {c_2\| J_k^{-1}\|^{-1}\over c_2+ \|G\|},
   $$
thereby concluding the proof of \eqref{e3.3} with $c_0:=  c_2\| J_k^{-1}\|^{-1}/ (c_2+ \|G\|)$.

Next let us  show \eqref{e3.4}.
Note that $I_d=P_{d-k}P_{d-k}^{T} + E_kE_k^{T}$.
Then for all $|a|\geq \delta$ and $|Y|=1$,
\begin{align}\label{e3.6}
\|(P_{d-1}QP_{d-1}^{T}+aJ)Y\|&\geq \|(E^{T}_{k} P_{d-1}QP_{d-1}^{T} E_k +aJ_{k}) E^T_kY +E^{T}_{k}P_{d-1}QP_{d-1}^{T}P_{d-k}P_{d-k}^{T}Y \|\nonumber\\
&\geq \|(E^{T}_{k} P_{d-1}QP_{d-1}^{T}E_k +aJ_{k})E^{T}_{k}Y\|
-\|E^{T}_{k}P_{d-1}QP_{d-1}^{T}P_{d-k}P_{d-k}^{T}Y\|\nonumber\\
&=\|(Q_{k}+aJ_{k} )E^{T}_{k}Y\|- c_3 \|P_{d-k}^{T}Y\|,
\end{align}
where   $Q_{k}=E^{T}_{k}P_{d-1}QP_{d-1}^{T}E_{k}$ is a semi-positive definite matrix and $c_3:=\|E^{T}_{k}P_{d-1}QP_{d-1}^{T}P_{d-k}\|.$
 Since $|Y|=1$,    we have $\|E^{T}_{k}Y\| \geq 1-\|P_{d-k}^{T}Y\| $. We apply  \eqref{e3.5}  to know that
 there exists a constant $c_2=c_2(J, Q)>0$ such that
$$
\|(Q_{k}+aJ_{k})E^{T}_{k}Y\|    \geq c_2 \|E^{T}_{k}Y\| \geq c_2(1-\|P_{d-k}^{T}Y\|),
$$
which, in combination with \eqref{e3.6}, shows that for all $|a|\geq \delta$ and $|Y|=1$,
\begin{align}\label{e3.7}
\|(P_{d-1}QP_{d-1}^{T}+aJ)Y \|\geq  c_2 -(c_2+c_3)\|P_{d-k}^{T}Y\|
\geq {\frac{c_2 }{4}}
\end{align}
whenever
$$\|P_{d-k}^{T}Y\|\leq \frac{c_2}{4c_2 +4c_3}.
$$

Consider the case $\|P_{d-k}^{T}Y\|\geq \frac{c_2}{4c_2 +4c_3}$.
Suppose
$Q=A^{T}A$ for some matrix A with $\det A\neq0$.
Since $J $  is a skew-symmetric matrix, we have
  $Y^{T}J Y =0$.  Hence for all $a\in \R$,
\begin{align}\label{e3.8}
\|(P_{d-1}QP_{d-1}^{T}+aJ)Y\|&\geq |Y^T(P_{d-1}QP_{d-1}^{T}+aJ)Y |
=| Y^T P _{d-1}QP_{d-1}^{T}Y |\nonumber\\
& =\|AP_{d-1}^{T}Y\|^{2}\geq \|A^{-1}\|^{-2} \|P_{d-k}^{T}Y\|^2\geq\frac{c_2^{2}\, \|A^{-1}\|^{-2}}{(4c_2 +4c_3)^{2}}
\end{align}
since $\det A\neq 0$.
From \eqref{e3.7} and \eqref{e3.8}, we complete the proof of  (ii) with
$$
c_1:=\min\{\frac{c_{2}}{4},\frac{c_2^{2}\, \|A^{-1}\|^{-2}}{(4c_2 +4c_3)^{2}}\}.
$$
The proof is completed.
\end{proof}


\bigskip

 \section{Proof of (i) of Theorem~\ref{th2.1}}
\setcounter{equation}{0}

In this  section, we will show that (i) of Theorem~\ref{th2.1}. That is,   for  $d\geq 4$,
the maximal operator $\sup_{t>0}|f\ast ( \mu_1)_{t} (x,u)|$ is bounded on $L^p(\mathbb R^{d+1})$
where $\mu_1$ is given as in \eqref{e2.2}, i.e.,
 \begin{eqnarray*}
\mu_1(x, u)=\chi_{\mu_1}(x,u)\iint_{\mathbb R \times \mathbb R} e^{i(\sigma(x_1- \Gamma_1(x')) +\tau u)} d\sigma d\tau
\end{eqnarray*}
with  $ x'= (x_{2},...x_{d})$, $\Gamma_1\in C^{\infty}(\mathbb R^{d-1})$
and   $\mu_1$ is supported in a small  neighborhood
of $(\Gamma_1(x'_{0}), x'_{0})$ for some $x'_{0}\in \R^{d-1}$.

 Let $\varphi\in C_0^{\infty}(\R)$ be an even smooth  function such that $\varphi=1$
 if $|s|\leq 1$ and such that the support of $\varphi$ is contained in $(-2, 2)$. Let $\phi(s)=\varphi(s)-\varphi(2s)$
 and let, for $k\geq 1$, $\phi_k(s):=\phi(2^{-k}s)$. Let $\phi_0(s):=\varphi(s)$.
 Then
 $$\supp \phi_0 \subseteq \{s:   |s|\leq 2 \}, \ \ \ {\rm and} \ \  \ \supp \phi_k \subseteq \{s: 2^{k-1}\leq |s|\leq 2^{k+1} \}
 $$
 for $k\geq 1$,
 and
 we have
\begin{eqnarray} \label{e4.2}
1= \sum\limits_{k=0}^{\infty} \phi_k(s) , \ \ \ \ \forall s\in\R.
\end{eqnarray}

Since $\mu_1$ has a sufficiently small support,  	
 we can choose a smooth nonnegative function $\chi_{1}$
  defined on $\mathbb{R}^{d}$,
which is equal to $1$ on $\supp \mu_1$ and also has a small support.  By the curvature hypothesis and the convexity of $\Sigma$,
we can further assume $\partial_{x'}^{2}\Gamma_1(x')$ is positive
definite for all $x\in\supp \chi_{1}$  such that
\begin{eqnarray} \label{e4.3}
\inf_{x\in \supp\chi_{1}}\left| {\rm det} \left(\partial_{x'}^{2}\Gamma_1(x') \right) \right|\geq c_{H}^{(1)}
\end{eqnarray}
for some $c_{H}^{(1)}>0$.   Define for $j, k\geq 0$,
\begin{align}\label{e4.4}
	K^{j,k}(x,u)&=\chi_{1}(x)\iint_{\mathbb R \times \mathbb R}
	e^{i\sigma(x_{1}-\Gamma_1(x'))}e^{i\tau u}\phi_{j}(\sigma)\phi_{k}(\tau)d\sigma d\tau\nonumber \\
	&=\chi_{1}(x)\mathcal{F}^{-1} (\phi_{j})\big(x_{1}-\Gamma_1(x')\big)\mathcal{F}^{-1}(\phi_{k})\big(u\big)
\end{align}
such that
  \begin{align*}
	\mu_1(x,u)  = \chi_{1}(x)\mu_1(x,u)=\chi_{\mu_1}(x,u)\sum_{j=0}^{\infty} \sum_{k=0}^{\infty}K^{j,k}(x,u).
\end{align*}
For $t>0$, define the dilates
\begin{align}\label{e4.5}
K^{j,k}_t(x,u)=t^{-(d+2)} K^{j,k}(t^{-1}x, t^{-2}u).
\end{align}
These, together with \eqref{e2.2}, tell us that
\begin{align}\label{e4.6}
|f\ast (\mu_1)_{t}|\leq C|f|\ast \left(\chi_{1}(x)\delta(x_{1}-\Gamma_1(x'))\delta(u)\right)_{t}= C\sum_{j=0}^{\infty} \sum_{k=0}^{\infty}  |f|\ast K^{j,k}_{t}.
\end{align}

For  $j=k=0$,    it can be verified   that
$
|K^{0,0}(x,u)|
 \leq C_N (1+|x|+|u|)^{-N}
$
for any $N>0$, and so  $\sup_{t>0}|f\ast K^{0,0}_{t}| $
  is controlled by the appropriate variant of the Hardy-Littlewood maximal function and therefore  (see \cite{St2})  we have the inequality
\begin{align}\label{e4.7}
\big|\big|\sup_{t>0}|f\ast K^{0,0}_{t}|\big|\big|_{L^{p}(\mathbb{R}^{d+1})}
\leq C||f||_{L^{p}(\mathbb{R}^{d+1})}, \ \ \ \  1<p<\infty.
\end{align}

Consider   $j+k\geq 1$.   By definition of $\Kone$, we see that
if $k=0$ and $j\geq1$, then it follows by an integration by parts,
\begin{align}\label{e4.8}
\big|\int_{\mathbb R^{d+1}} K^{j,0}(x,u)dxdu\big|\leq C_{N}2^{-Nj}.
\end{align}
If $k>0$, then
\begin{align*}
\int_{\mathbb R^{d+1}} \Kone(x,u)dxdu&=\int_{\mathbb R^{d+1}} \chi_{1}(x)\mathcal{F}(\phi_{j})\big(x_{1}-\Gamma_1(x')\big)
\mathcal{F}(\phi_{k})(u)dxdu=0.
\end{align*}
Choose a nonnegative function $b\in C^{\infty}_{c}(\mathbb{R}^{d+1})$ such that $\int_{\mathbb R^{d+1}} b(x,u)dxdu=1$. Define
\begin{align}\label{e4.9}
\KKone(x,u)=\Kone(x,u)-\gamma_{j,k}b(x,u).
\end{align}
where
\begin{eqnarray} \label{gamma}
\gamma_{j,k}=
\left\{
\begin{array}{lll}
\int_{\mathbb R^{d+1}} \Kone(x,u)dxdu, & k=0\\[6pt]
0,& k>0,
\end{array}
\right.
\end{eqnarray}
 and by \eqref{e4.8}, $|\gamma_{j,k}|\leq C_{N}2^{-Nj}$ for $k=0$, and  $ \gamma_{j,k}=0$ for $k>0$. Since the maximal operator
 generated by the kernel $b$ is bounded by the nonisotropic  Hardy-Littlewood maximal operator, we see that for $1<p\leq \infty,$
\begin{align} \label{e4.10}
|\gamma_{j,0}|\big\|\sup_{t>0}|f\ast b_{t}|\big\|_{L^{p}(\mathbb{R}^{d+1})}
\leq  C_{N}2^{-Nj} \|f\|_{L^{p}(\mathbb{R}^{d+1})}.
\end{align}
Finally, we have that
\begin{align}\label{e4.11}
\int_{\mathbb R^{d+1}} \KKone(x,u)dxdu=0.
\end{align}

\subsection{Square functions and almost orthogonality }

\begin{lemma}\label{le4.1}
For all $j+k\geq1$, $\theta\in[0,1]$, we have
\begin{align*}
\big\|\sup_{t>0}|f\ast \Kone_{t}|\big\|_{L^{2}(\mathbb{R}^{d+1})}
\leq C(j+k)2^{j/2}C_{j,k}(\theta)||f||_{L^{2}(\mathbb{R}^{d+1})},
\end{align*}
where
\begin{equation}\label{e4.12}
C_{j,k}(\theta):=\begin{cases} 2^{-\theta k} 2^{-(2-d\theta)(d-2)j/4}, & \textrm{if\ } \ M2^{k}\geq 2^{j},\\[4pt]
2^{-(d-1)j/2}, & \ \textrm{if\ } \ M2^{k}\leq 2^{j}
\end{cases}
\end{equation}
for some  sufficiently large constant $M>0$.
\end{lemma}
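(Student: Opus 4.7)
The plan is to apply Lemma~\ref{le2.2} to reduce $\|\sup_{t>0}|f\ast K^{j,k}_t|\|_{2}$ to the quantities $A_1$ and $A_2$ controlling respectively the $\ell^2$-sum over dyadic indices $\ell$ of $L^2$-norms of $F_\ell(\cdot,s) = f \ast K^{j,k}_{2^\ell s}$ and of its $s$-derivative, and then to take the partial Fourier transform in the central variable $u$ to turn the group convolution into a $\tau$-parametrized family of twisted convolution operators on $\R^d$. Setting $N^j(x) := \chi_1(x)\,\mathcal F^{-1}(\phi_j)(x_1 - \Gamma_1(x'))$, a direct computation gives
\[
\widetilde{f \ast K^{j,k}_t}(x,\tau) = C\,\phi_k(t^2\tau)\,(S^{j,\tau}_t \tilde f(\cdot,\tau))(x), \quad S^{j,\tau}_t h(x) := \int_{\R^d} N^j_t(x-y)\, e^{i\tau\, x^T J y}\, h(y)\, dy.
\]
Since $\phi_k(t^2\tau)$ forces $|\tau|\sim 2^k/t^2$, for each $\tau$ only $O(1)$ values of $\ell$ contribute, and by Plancherel both $A_1$ and $A_2$ are controlled by the $\tau$-supremum of the $L^2\to L^2$ operator norm of $S^{j,\tau}_t$ (respectively of its $s$-derivative). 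The $s$-differentiation brings down a factor $\sim 2^j$ from $\phi_j$, so $A_2 \lesssim 2^j A_1$ and $\sqrt{A_1 A_2} \lesssim 2^{j/2} A_1$; this accounts for the $2^{j/2}$ in the claimed bound.

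The heart of the argument is to bound $\|S^{j,\tau}_t\|_{L^2\to L^2}$ in two different ways and interpolate. The first, valid for every $\tau$, is the pure-curvature estimate $\lesssim 2^{-(d-1)j/2}$, obtained by stationary phase on the Fourier transform of $N^j$ using the Gaussian curvature lower bound \eqref{e4.3}; it drives the regime $M 2^k \leq 2^j$ in which the twist contributes no additional gain. The second comes from a $TT^*$ analysis: the kernel of $S^{j,\tau}_t (S^{j,\tau}_t)^*$ is an oscillatory integral whose linear phase $(x-z)^T J y\,\tau$, combined via stationary phase in the surface-parameter variables with the surface Hessian, produces an effective phase Hessian of the form $\sigma\,\partial_{x'}^2\Gamma_1 + \tau J_k$ (up to projection onto the first $k$ coordinates). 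Lemma~\ref{le3.1} supplies exactly the uniform lower bound on the smallest singular value of this matrix that is needed to apply a van der Corput-type oscillatory estimate: part (i) handles the regime $M 2^k \geq 2^j$ in which the twist dominates, while part (ii) takes care of the extra projection arising from directions lying in $\ker J$. Real interpolation between the two $L^2$ bounds along the parameter $\theta \in [0,1]$ then produces $C_{j,k}(\theta)$ in the stated form, with the two cases of the definition corresponding to the two regimes above.

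The linear factor $(j+k)$ in the claim arises from logarithmic losses in an almost-orthogonal decomposition of $S^{j,\tau}_t$ handled by Cotlar--Stein (Lemma~\ref{le2.3}): with individual atoms of operator norm $A \sim C_{j,k}(\theta)$ and pairwise Schur-type bounds of size $B \lesssim 1$, the lemma produces a factor $\sqrt{\log(B/A)} \sim \sqrt{j+k}$, and the two almost-orthogonal groupings that appear (one microlocalizing to cones on the surface at scale $2^j$, the other microlocalizing the twist frequency at scale $2^k$) combine to give the stated $(j+k)$.

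The principal obstacle will be the $TT^*$ analysis: identifying the phase Hessian exactly as $\sigma\,\partial_{x'}^2\Gamma_1 + \tau J_k$ up to admissible errors, verifying its uniform non-degeneracy via Lemma~\ref{le3.1}, and tracking the scaling parameters $(t,\tau,\sigma)$ carefully so that the two regimes of $C_{j,k}(\theta)$ correspond to the two alternatives in Lemma~\ref{le3.1}. Once that nondegeneracy is established, interpolation of the two $L^2$ bounds together with Cotlar--Stein and Lemma~\ref{le2.2} assemble the final estimate.
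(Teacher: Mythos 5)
Your overall skeleton (Lemma~\ref{le2.2} to reduce to an $\ell^2$-sum over dyadic scales, partial Fourier transform in $u$, oscillatory-integral bounds fed by Lemma~\ref{le3.1}, interpolation in $\theta$, Cotlar--Stein for the log loss) matches the paper's architecture, but the key analytic claim is wrong. You assert that the pure-curvature bound $\|S^{j,\tau}_t\|_{L^2\to L^2}\lesssim 2^{-(d-1)j/2}$ is ``valid for every $\tau$,'' obtained by stationary phase on $\widehat{N^j}$. The operator $S^{j,\tau}_t$ is a \emph{twisted} convolution, not a convolution, so its operator norm is not controlled by $\sup|\widehat{N^j}|$; one must run the oscillatory-integral analysis on the combined phase $\sigma(x_1-y_1-\Gamma_1(x'-y'))-\tau x^TJy$. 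When $|\tau|\gtrsim 2^j$ the mixed Hessian is $\sigma P_{d-1}\Gamma_1''P_{d-1}^T-\lambda_1^{-1}\lambda_2 J$ and, because $J$ is degenerate, the best one extracts (via Lemma~\ref{le3.1}(ii), H\"ormander's lemma, and the loss from the $\sigma$-integration) is $2^{-(d-2)j/2}$, not $2^{-(d-1)j/2}$; the $(d-1)/2$ decay is recovered only in the regime $M2^k\le 2^j$, and even there it requires the auxiliary-variable device of \cite[p.~500]{St2} (adjoining $\eta$ to make a nondegenerate $(d+1)\times(d+1)$ Hessian with determinant $(\sigma\eta)^d\det\Gamma_1''+O(\lambda_1^{-1}\lambda_2)$, which is bounded below precisely because $M|\lambda_2|\le|\lambda_1|$). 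Your error is self-detecting: interpolating your two claimed ingredients ($2^{-(d-1)j/2}$ uniformly in $\tau$ against a $2^{-k}$-type gain) would yield $2^{-(1-\theta)(d-1)j/2}2^{-\theta k}$, which is strictly stronger than the stated $C_{j,k}(\theta)=2^{-\theta k}2^{-(2-d\theta)(d-2)j/4}$ in the regime $M2^k\ge 2^j$, and would ultimately give the optimal range $p>d/(d-1)$ in Theorem~\ref{th2.1}(i) --- which the paper explicitly cannot reach. The correct first-regime ingredients are \eqref{e4.27} (exponent $(d-2)/2$ in $\lambda_1$) and \eqref{e4.28} (exponent $1$ in $\lambda_2$ at the cost of a factor $|\lambda_1|$), whose interpolation gives the stated exponents.

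A second, smaller gap: your claim that ``for each $\tau$ only $O(1)$ values of $\ell$ contribute'' fails when $k=0$, since $\phi_0=\varphi$ has no lower frequency cutoff, so $\phi_0(2^{2\ell}t^2\tau)\ne 0$ for all sufficiently negative $\ell$; the lemma does cover $k=0$, $j\ge1$. The paper handles this by replacing $\Kone$ with the mean-zero kernel $\KKone=\Kone-\gamma_{j,k}b$ (see \eqref{e4.9}--\eqref{e4.11}) and proving the $\ell^2$-orthogonality across scales via Cotlar--Stein together with Schur-type bounds \eqref{e4.16}, \eqref{e4.18} that exploit this cancellation; your proposal needs either that correction or a separate treatment of $k=0$. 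The Cotlar--Stein application in the paper is over the dyadic scale index $n$ (with $A\sim C_{j,k}(\theta)$, $B\sim 2^{2j+2k}$), not over a microlocal decomposition of $S^{j,\tau}_t$ as you describe, though that part of your account is only a misattribution of the $(j+k)$ factor rather than a fatal flaw.
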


\begin{proof}
By \eqref{e4.9}, \eqref{gamma} and \eqref{e4.10}, it suffices to show
\begin{align*}
\big\|\sup_{t>0}|f\ast \KKone_{t}|\big\|_{L^{2}(\mathbb{R}^{d+1})}
\leq C(j+k)2^{j/2}C_{j,k}(\theta)||f||_{L^{2}(\mathbb{R}^{d+1})}.
\end{align*}
We may write
$$\sup_{t>0}|f\ast \KKone_{t}|=\sup_{n\in \mathbb{Z}}\sup_{t\in[1,2]}|f\ast \KKone_{2^{n}t}|.$$
By Lemma \ref{le2.2} with $F_{n}(x,u,t)=f\ast \KKone_{2^{n}t}(x,u)$ to see  that Lemma~\ref{le4.1} follows
 from the following estimates which are uniform in $t\in [1,2]$:
\begin{align}\label{e4.13}
\left(\sum_{n}\left\|f\ast \KKone_{2^{n}t}\right\|_{L^{2}(\mathbb{R}^{d+1})}^{2}\right)^{1/2}
\leq C(j+k)C_{j,k}(\theta)\|f\|_{L^{2}(\mathbb{R}^{d+1})},
\end{align}
\begin{align}\label{e4.14}
\left(	\sum_{n\in \mathbb{Z}}\left\|f\ast \left[s\frac{d}{ds}\KKone_s\right]_{s=2^{n}t}\right\|_{L^{2}(\mathbb{R}^{d+1})}^{2} \right)^{1/2}
\leq C(j+k)2^{j}C_{j,k}(\theta)\|f|_{L^{2}(\mathbb{R}^{d+1})}.
\end{align}

Next we apply an almost orthogonality    Lemma~\ref{le2.3} to prove
 \eqref{e4.13} and \eqref{e4.14}. One sees that  the inequality \eqref{e4.13}
follows from  the following estimates \eqref{e4.15} and \eqref{e4.16} if we apply a scaling argument and Lemma~\ref{le2.3}
with $A=2^{-(1-\theta)\frac{d-1}{2}j}2^{-\theta k}$ and $B= 2^{2j+2k}$:
 	\begin{align}\label{e4.15}
\|f\ast \KKone\|_{L^{2}(\mathbb{R}^{d+1})}\leq C C_{j,k}(\theta)||f||_{L^{2}(\mathbb{R}^{d+1})}, \ \ \ \ \forall
 		\theta\in[0,1]
 	\end{align}
  and
	\begin{align}\label{e4.16}
		\|f\ast  (\KKone_{2^{n}})^{\ast} \ast \KKone_{2^{n'} }\|_{L^{2}(\mathbb{R}^{d+1})}\leq C  2^{4j+4k}2^{-|n'-n|}\|f\|_{L^{2}(\mathbb{R}^{d+1})}
	\end{align}
 first for $n\leq n'$ and then by taking adjoints also for $n>n'$;
The inequality \eqref{e4.14}
follows from  the following estimates \eqref{e4.17} and \eqref{e4.18} if we apply Lemma~\ref{le2.3}
with $A=2^{j}C_{j,k}(\theta)$ and $B=2^{3(j+k)}$:

\begin{align}\label{e4.17}
	\left\|f\ast \left[s\frac{d}{ds}\KKone_s\right]_{s=2^{n}t}\right\|_{L^{2}(\mathbb{R}^{d+1})}
	\leq C2^{j}C_{j,k}(\theta)\|f\|_{L^{2}(\mathbb{R}^{d+1})},
\end{align}
 and
\begin{align}\label{e4.18}
	\left\|f\ast \left(\left[s\frac{d}{ds}\KKone_s\right]_{s=2^{n}t}\right)^{\ast}
	\ast \left[s\frac{d}{ds}\KKone_s\right]_{s=2^{n'}t} \right\|_{L^{2}(\mathbb{R}^{d+1})}
	\leq C2^{6j+6k}2^{-|n-n'|}||f||_{L^{2}(\mathbb{R}^{d+1})},
\end{align}
which are uniform in $t\in[1,2]$.
The proof of \eqref{e4.15}, \eqref{e4.16}, \eqref{e4.17} and  \eqref{e4.18} will be given in Sections~4.1.2
and  ~4.1.3 below, which are obtained by  using the cancellation of the kernels  of $\KKone$ and $s\frac{d}{ds}\KKone_s$
to show almost orthogonality properties
for these operators and certain estimates for oscillatory integrals to establish the decay estimates.
\end{proof}

\subsubsection{Reduction to oscillatory integral operators}

The following lemma is due to H\"ormander \cite{H}. Here we prove it again to show the upper bound of
the operator norm is irrelevant to the derivative of order $2$ of the phase function.
\begin{lemma}\label{le4.2}
	Define
	\begin{align*}
		T_{\lambda}f(x)=\int e^{i\lambda\Phi(x,y)}a(x,y)f(y)dy
	\end{align*}
with $\Phi(x,y)\in C^{\infty}(\mathbb{R}^{d}\times \mathbb{R}^{d})$ and $a(x,y)\in C_{c}^{\infty}(\mathbb{R}^{d}\times \mathbb{R}^{d})$.
Suppose
\begin{align}\label{e4.19}
\inf_{|X|= 1,(x,y)\in \supp a}|\partial^2_{x,y}\Phi(x,y)X|\geq c>0
\end{align}
for some constant $c>0$ and for each $|\alpha|+|\beta|\geq 3$,
\begin{align}\label{e4.20}
\sup_{(x,y)\in \supp a}|\partial_{x}^{\alpha}\partial_{y}^{\beta}\Phi(x,y)|<\infty.
\end{align}
Then there exists a constant   $C>0$  independent of $\lambda$ such that
\begin{align}\label{e4.21}
||T_{\lambda}f||_{L^{2}(\mathbb{R}^{d})}\leq C (1+|\lambda|)^{-\frac{d}{2}}||f||_{L^{2}(\mathbb{R}^{d})}.
\end{align}
\end{lemma}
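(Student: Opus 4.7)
The plan is to use the $TT^{\ast}$ method, reducing the problem to a kernel estimate via non-stationary phase. For $|\lambda|\leq 1$ the trivial bound $\|T_\lambda\|\leq C$ from the compact support of $a$ handles the claim, so I can assume $|\lambda|$ large. Writing
\[
T_\lambda T_\lambda^{\ast} f(x) = \int K(x,z)\,f(z)\,dz, \qquad
K(x,z) = \int e^{i\lambda\,\psi_{x,z}(y)}\,a(x,y)\,\overline{a(z,y)}\,dy,
\]
with $\psi_{x,z}(y):=\Phi(x,y)-\Phi(z,y)$, it suffices to prove $\|T_\lambda T_\lambda^{\ast}\|_{L^2\to L^2}\leq C|\lambda|^{-d}$ and then take square roots.

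The first substantive step is a lower bound on $\nabla_y\psi_{x,z}$. By the fundamental theorem of calculus,
\[
\nabla_y\psi_{x,z}(y) = \int_0^1 \partial^2_{xy}\Phi\bigl(z+s(x-z),y\bigr)(x-z)\,ds,
\]
so applying hypothesis \eqref{e4.19} to the unit vector $(x-z)/|x-z|$ yields $|\nabla_y\psi_{x,z}(y)|\geq c|x-z|$ uniformly on the support of the amplitude. Define the differential operator $L = (i\lambda|\nabla_y\psi|^{2})^{-1}\nabla_y\psi\cdot\nabla_y$, which satisfies $L[e^{i\lambda\psi}]=e^{i\lambda\psi}$; integrating by parts $N$ times produces $|K(x,z)|\leq \int |({}^{t}L)^N[a(x,y)\overline{a(z,y)}]|\,dy$. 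The crucial observation is that the same Taylor representation shows $\partial_y^{\alpha}\nabla_y\psi_{x,z} = O(|x-z|)$ with constants depending only on $\partial^{\alpha+(0,2)}_{xy}\Phi$, i.e.\ derivatives of $\Phi$ of order $|\alpha|+2\geq 3$. Consequently, $|\nabla_y\psi|^{-2}\nabla_y\psi$ has size $|x-z|^{-1}$ and each of its $y$-derivatives stays of order $|x-z|^{-1}$ (the growth in the denominator exactly cancels the $|x-z|$ in the numerator), so the quotient-rule bookkeeping gives
\[
\bigl|({}^{t}L)^N[a(x,y)\overline{a(z,y)}]\bigr| \lesssim (|\lambda||x-z|)^{-N}.
\]
Combined with the trivial estimate $|K(x,z)|\leq C$, this yields $|K(x,z)|\leq C_N(1+|\lambda||x-z|)^{-N}$.

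To finish, I apply Schur's test. The support of $a$ forces $z$ to lie in a bounded set, so for any $N>d$,
\[
\sup_{x}\int |K(x,z)|\,dz \;\leq\; C_N \int (1+|\lambda||x-z|)^{-N}\,dz \;\leq\; C\,|\lambda|^{-d},
\]
and the symmetric estimate holds with the roles of $x$ and $z$ exchanged. Thus $\|T_\lambda T_\lambda^{\ast}\|_{L^2\to L^2}\leq C|\lambda|^{-d}$, which gives \eqref{e4.21}.

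The main obstacle, and the reason the authors reprove this classical result, is keeping track of which derivatives of $\Phi$ enter the implied constants. The standard presentation produces constants depending on $\|\partial^2_{x,y}\Phi\|_\infty$ as well, but in the argument above the second derivatives intervene only through the \emph{lower} bound $c$ in \eqref{e4.19}; the Taylor factorization of $\nabla_y\psi_{x,z}$ moves one factor of $(x-z)$ outside, so every subsequent $y$-derivative picks up only $\partial^{\beta}\Phi$ with $|\beta|\geq 3$. This is exactly the refinement claimed before the lemma, and it is the technical point I would have to verify carefully in the full write-up.
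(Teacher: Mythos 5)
Your overall strategy is the same as the paper's ($TT^{\ast}$ reduction, non-stationary phase in $y$, Schur's test, with the observation that only the lower bound in \eqref{e4.19} and derivatives of order $\geq 3$ enter the constants), but there is a genuine gap at the key step. From
\[
\nabla_y\psi_{x,z}(y)=\Bigl(\int_0^1 \partial^2_{xy}\Phi\bigl(z+s(x-z),y\bigr)\,ds\Bigr)(x-z)
\]
you cannot conclude $|\nabla_y\psi_{x,z}(y)|\geq c|x-z|$ by ``applying \eqref{e4.19} to the unit vector $(x-z)/|x-z|$.'' Hypothesis \eqref{e4.19} bounds $|MX|$ from below for each individual matrix $M=\partial^2_{xy}\Phi(\cdot,y)$, but the set of matrices satisfying such a bound is not convex: the vectors $\partial^2_{xy}\Phi(z+s(x-z),y)X$ may point in different directions as $s$ varies and their average can be arbitrarily small (in the extreme, $M$ and $-M$ both satisfy \eqref{e4.19} and average to zero). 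So the uniform lower bound you assert is false in general, and without it the operator $L$ is not well defined and the integration by parts collapses exactly where it is needed, namely for $x,z$ far apart.

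The missing idea is localization. The paper first inserts a partition of unity so that the amplitude is supported where $|x-z|$ is small, and only then runs the non-stationary phase argument, using the expansion $\nabla_y\Phi(y,x)-\nabla_y\Phi(y,z)=\partial^2_{y,x}\Phi(y,x)(x-z)+O(|x-z|^2)$; here the $O(|x-z|^2)$ error is controlled by third-order derivatives (so by \eqref{e4.20}, preserving the refinement you care about) and is absorbed by the main term once $|x-z|\leq c/(2C)$, giving $|\nabla_y\psi_{x,z}|\geq \tfrac{c}{2}|x-z|$ on each small piece. Since finitely many pieces cover $\supp a$, the kernel bound $|K(x,z)|\leq C_N(1+|\lambda||x-z|)^{-N}$ and the Schur step then go through as you wrote them. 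Your bookkeeping of which derivatives of $\Phi$ enter the constants is correct once this localization is in place; without it, the proof as written does not establish \eqref{e4.23} for all $x,z$ in the support.
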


\begin{proof}
If $|\lambda|\leq1$,    \eqref{e4.21}   can be obtained
by H\"older's inequality since $a(x,y)$ is a compactly supported function. For all $|\lambda|\geq1$,   it suffices   (by $T^{\ast}T$ method)
  to show that
\begin{align}\label{e4.22}
	||T_{\lambda}^{\ast}T_{\lambda}f||_{L^{2}(\mathbb{R}^{d})}\leq C|\lambda|^{-d}||f||_{L^{2}(\mathbb{R}^{d})}.
\end{align}
We may write
\begin{align*}
T_{\lambda}^{\ast}T_{\lambda}f(x)=\int_{\mathbb{R}^{d}} K_{\lambda}(x,z)f(z)dz,
\end{align*}
where
\begin{align*}
K_{\lambda}(x,z)=\int_{\mathbb{R}^{d}}  e^{-i\lambda(\Phi(y,x)-\Phi(y,z))}\overline{a}(y,x)a(y,z)dy.
\end{align*}
By a unity of participation and Schur's lemma, the proof of \eqref{e4.22} reduces  to show if $|x-z|$ is small, then
\begin{align}\label{e4.23}
|K_{\lambda}(x,z)|\leq C_{N}(1+|\lambda||x-z|)^{-N}
\end{align}
for a sufficiently large integer $N\geq1$.
Notice that
\begin{align*}
|\nabla_{y}\Phi(y,x)-\nabla_{y}\Phi(y,z)-\partial_{y,x}^{2}\Phi(y,x)(x-z)|\leq C|x-z|^{2}
\end{align*}
for some constant $C>0$. Then by the triangle inequality and \eqref{e4.19},
\begin{align*}
|\nabla_{y}\Phi(y,x)-\nabla_{y}\Phi(y,z)|\geq c|x-z|-C|x-z|^{2}\geq \frac{c}{2}|x-z|
\end{align*}
if $|x-z|\leq  {c}/{2C}$.
On another hand, it follows by \eqref{e4.20} that for all $|\alpha|\geq 1, $
$$
|\partial_{y}^{\alpha}(\nabla_{y}\Phi(y,x)-\nabla_{y}\Phi(y,z))|\leq C_{\alpha}|x-z|.
$$
By integration by parts, if $|x-z|\leq  {c}/{2C}$, then
\begin{align*}
|K_{\lambda}(x,z)|\leq C_{N} |\lambda|^{-N}|x-z|^{-N}
\end{align*}
for a sufficiently large integer $N\geq1$.
Together with the trivial estimate $|K_{\lambda}(x,z)|\leq C$,  we finish the proof of \eqref{e4.23}
and thus \eqref{e4.22} follows readily. This completes the proof of Lemma~\ref{le4.2}.
\end{proof}

Now we begin to
 verify   estimates  \eqref{e4.15}, \eqref{e4.16}, \eqref{e4.17} and  \eqref{e4.18}. To do it,
we will reduce them to  estimates for oscillatory integral operators. Recall that
  $E_{d-1}$  and  $P_{d-1}$ are two  $d\times (d-1)$ matrices given as in \eqref{e3.1}.
One can write
\begin{eqnarray*}
	f\ast K^{j,k}(x, u)=\int_{\mathbb R^{d+1}} K^{j,k}(x-y,u-v+x^{T}Jy)f(y,v)dydv,
\end{eqnarray*}
where
\begin{eqnarray*}
	K^{j,k}(x-y,u-v+x^{T}Jy)
	=\chi_{1}(x-y)\iint_{\R\times\R} e^{i(\sigma(x_{1}-y_{1}-\Gamma_1(x'-y')) + \tau (u-v+x^{T}Jy) )}
	\phi_{j}(\sigma)\phi_{k}(\tau)d\sigma d\tau.
\end{eqnarray*}
We then apply a Fourier transform on $\mathbb R$, in the $u$ variable of  $f\ast K^{j,k}(x, u)$,
to obtain
\begin{align}\label{e4.24}
 \mathcal{F}_{u}(f\ast K^{j,k})(x, \lambda_{2})
 	=2^{j}\phi_{k}(\lambda_{2})\int_{\R^{d+1}} e^{i2^{j}\sigma(x_{1}-y_{1}-\Gamma_1(x'-y'))} e^{-i\lambda_{2} x^{T}Jy}\chi_{1}(x-y)
\phi(\sigma)\mathcal{F}_{v}^{-1}f(y,\lambda_{2})dyd\sigma.
\end{align}
Define for $|\lambda_{1}|\neq1$ and $\lambda_{2}\in\R$,
\begin{align*}
T_{\lambda_{1},\lambda_{2}}g(x)&=\lambda_{1}\int_{\R^{d+1}} \chi_{1}(x-y)e^{i\lambda_{1}\sigma(x_{1}-y_{1}-\Gamma_1(x'-y'))}
e^{-i\lambda_{2}x^{T}Jy}\phi(\sigma)g(y)dyd\sigma.
\end{align*}
If $|\lambda_{1}|=1$, we should replace $\phi$ by $\varphi$.
Then we have the following result.
\begin{lemma}\label{le4.3}
For all $\theta\in[0,1]$,  there exists a constant $C>0$ such that
\begin{align}\label{e4.25}
||T_{\lambda_{1},\lambda_{2}}g||_{L^{2}(\mathbb{R}^{d})}\leq
\left\{
\begin{array}{lll} C
(1+|\lambda_{1}|)^{-(1-\frac{d}{2}\theta)\frac{d-2}{2}}(1+|\lambda_{2}|)^{-\theta}||g||_{L^{2}(\mathbb{R}^{d})} &
\textrm{\ if\ } \ M|\lambda_{2}|\geq |\lambda_{1}|;\\[6pt]
  C
 (1+|\lambda_{1}|)^{-\frac{d-1}{2}}||g||_{L^{2}(\mathbb{R}^{d})} & \textrm{\ if\ } \ M|\lambda_{2}|\leq |\lambda_{1}|
 \end{array}
 \right.
\end{align}
for some sufficiently large $M>0$.
\end{lemma}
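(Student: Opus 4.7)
The plan is to reduce the $L^2$ boundedness of $T_{\lambda_1,\lambda_2}$ to applications of Lemma~\ref{le4.2}, verifying the non-degeneracy hypothesis via Lemma~\ref{le3.1}. The two cases in the statement correspond to which of $|\lambda_1|$, $|\lambda_2|$ plays the role of the large parameter.

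For $|\lambda_1|\le 2$, Minkowski's inequality applied directly to the kernel of $T_{\lambda_1,\lambda_2}$ (integrable in $(y,\sigma)$ uniformly in $x$, since $\chi_1$ and $\phi$ are compactly supported) gives $\|T_{\lambda_1,\lambda_2}\|_{L^2\to L^2}\le C|\lambda_1|\le C$, which matches \eqref{e4.25} because both right-hand sides are bounded below by a positive constant in this regime. So I may assume $|\lambda_1|\ge 2$ and write $T_{\lambda_1,\lambda_2} = \int \phi(\sigma)S_\sigma\,d\sigma$ with $S_\sigma g(x)=\lambda_1\int\chi_1(x-y)e^{i\Phi_\sigma(x,y)}g(y)\,dy$ and $\Phi_\sigma(x,y) = \lambda_1\sigma(x_1-y_1-\Gamma_1(x'-y')) - \lambda_2 x^T Jy$. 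A direct computation (using $J^T=-J$) gives the mixed Hessian
\[
H_\sigma := \partial_x\partial_y\Phi_\sigma = \lambda_1\sigma\,P_{d-1}D^2\Gamma_1(x'-y')P_{d-1}^T - \lambda_2 J.
\]

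In \emph{Case B} ($M|\lambda_2|\le|\lambda_1|$), $H_\sigma$ is degenerate in the $e_1$-direction because the $P_{d-1}$ projectors annihilate $e_1$ and the $-\lambda_2 J$ contribution on $e_1$ is of relative size $\le 1/M$. I handle this by integrating by parts in $\sigma$: the resulting kernel is rapidly decaying outside the slab $|x_1-y_1-\Gamma_1(x'-y')|\lesssim|\lambda_1|^{-1}$. Inside the slab I parametrize the $y$-integration by $(s,y')\in\R\times\R^{d-1}$ with $s=x_1-y_1-\Gamma_1(x'-y')$; the residual phase in the $y'$-variable is $(d-1)$-dimensional with mixed Hessian $\lambda_1\sigma D^2\Gamma_1(x'-y')$, uniformly non-degenerate on $\supp\chi_1$ by \eqref{e4.3}. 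Lemma~\ref{le4.2} in $d-1$ dimensions, combined with the slab width factor $|\lambda_1|^{-1}$ absorbing the prefactor $\lambda_1$, yields $\|T_{\lambda_1,\lambda_2}\|\le C(1+|\lambda_1|)^{-(d-1)/2}$.

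In \emph{Case A} ($M|\lambda_2|\ge|\lambda_1|$), I establish the family of bounds by proving the two endpoints and using $\|T\|\le\min(A,B)\le A^{1-\theta}B^\theta$. For the "$\theta=1$" endpoint, factor $\lambda_2$ out: $H_\sigma = \lambda_2(\alpha\sigma P_{d-1}D^2\Gamma_1 P_{d-1}^T - J)$ with $\alpha=\lambda_1/\lambda_2$, $|\alpha|\le M$. Applying Lemma~\ref{le3.1}(ii) with $Q=|\alpha\sigma|D^2\Gamma_1(x'-y')$ (positive definite by \eqref{e4.3}) and $a=\pm1$ gives a uniform lower bound on the rescaled Hessian on the relevant range of $\alpha\sigma$; Lemma~\ref{le4.2} with large parameter $|\lambda_2|$ then yields $\|S_\sigma\|\le C|\lambda_1|(1+|\lambda_2|)^{-d/2}$, which dominates $C(1+|\lambda_1|)^{(d-2)^2/4}(1+|\lambda_2|)^{-1}$. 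For the "$\theta=0$" endpoint $(1+|\lambda_1|)^{-(d-2)/2}$, I isolate a $(d-2)$-dimensional subspace on which $H_\sigma$ remains uniformly of size $|\lambda_1|$: using the block form $J=\mathrm{diag}(J_k,0)$ of \eqref{e2.1} together with Lemma~\ref{le3.1}(i), I apply Plancherel in the coordinates where $J$ and $\lambda_2 J$ produce "bad" directions and then apply Lemma~\ref{le4.2} on the remaining $(d-2)$-dimensional slice where $\lambda_1\sigma D^2\Gamma_1$ is the dominant, non-degenerate mixed Hessian.

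The main obstacle is the "$\theta=0$" endpoint in Case~A: isolating a $(d-2)$-dimensional slice on which the mixed Hessian is uniformly of size $|\lambda_1|$, and doing so uniformly in $k\in[2,d)$, requires the full strength of Lemma~\ref{le3.1} and a careful Plancherel reduction matching the skew-symmetric block structure of $J$. Case~B's integration-by-parts reduction is the second delicate step, since the rapid-decay remainders must be controlled precisely to avoid loss in the $(d-1)/2$ exponent.
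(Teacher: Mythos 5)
Your overall architecture (reduce to Lemma~\ref{le4.2} via Lemma~\ref{le3.1}, split according to which of $|\lambda_1|,|\lambda_2|$ dominates, interpolate two endpoint bounds in the first case) is the paper's architecture, but each of the three key steps has a genuine gap. In Case~A, the $\theta=0$ endpoint $(1+|\lambda_1|)^{-(d-2)/2}$ cannot come from a $(d-2)$-dimensional application of Lemma~\ref{le4.2}: the prefactor $\lambda_1$ forces you to extract $(1+|\lambda_1|)^{-d/2}$, i.e.\ full $d$-dimensional non-degeneracy of $\sigma P_{d-1}\partial^2\Gamma_1 P_{d-1}^T-\lambda_1^{-1}\lambda_2 J$. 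This is exactly what Lemma~\ref{le3.1}(ii) provides, and it is available precisely because $|\lambda_1^{-1}\lambda_2|\geq 1/M$ in Case~A, so the skew term fills in the single direction $e_1$ annihilated by $P_{d-1}$; a $(d-2)$-dimensional slice undershoots by a factor $|\lambda_1|$. Your $\theta=1$ endpoint is worse: the bound $|\lambda_1|(1+|\lambda_2|)^{-d/2}$ is false when $\mathrm{rank}\,J=k<d$ (take $\lambda_1$ fixed and $\lambda_2\to\infty$; the phase $x^TJy$ does not involve the last $d-k$ coordinates, so the best possible decay is $(1+|\lambda_2|)^{-k/2}$). Correspondingly, Lemma~\ref{le3.1}(ii) with $Q=|\alpha\sigma|\partial^2\Gamma_1$ gives no uniform constant, since $c_1$ degenerates as $\alpha\sigma\to0$ (at $\alpha=0$ the matrix $aJ$ is singular). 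The correct move, which the paper makes, is to apply Lemma~\ref{le4.2} only in the first $k$ variables, where $J_k$ is invertible and Lemma~\ref{le3.1}(i) (with $aG+J_k$, $|a|$ bounded) applies, yielding $|\lambda_1|(1+|\lambda_2|)^{-k/2}\leq|\lambda_1|(1+|\lambda_2|)^{-1}$; this weaker endpoint still interpolates to \eqref{e4.25}.

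Case~B is where the missing idea is most serious. Once you freeze $s=x_1-y_1-\Gamma_1(x'-y')$, the term $\lambda_1\sigma s$ is constant in $y'$, so the residual phase in $y'$ has mixed Hessian of size $O(|\lambda_2|)$ only --- the claimed Hessian $\lambda_1\sigma\,\partial^2\Gamma_1(x'-y')$ has been absorbed into the change of variables and is no longer present. If instead you freeze $y_1$, the $(d-1)$-dimensional bound $\lambda_1(1+|\lambda_1|)^{-(d-1)/2}$ per slice loses a factor of $|\lambda_1|$ when summed over $y_1$, and the ``slab width $|\lambda_1|^{-1}$'' does not recover it cleanly because the localization $|x_1-y_1-\Gamma_1(x'-y')|\lesssim|\lambda_1|^{-1}$ depends on $x'-y'$, so the slabs in $y_1$ do not map to almost-orthogonal regions in $x$. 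The paper's resolution is Stein's auxiliary-variable device: introduce $\eta$ and the phase $\sigma\eta(x_1-y_1-\Gamma_1(x'-y'))-\lambda_1^{-1}\lambda_2x^TJy$, check that the $(d+1)\times(d+1)$ mixed Hessian in $((x,\eta),(y,\sigma))$ has determinant $(\sigma\eta)^d\det\partial^2\Gamma_1+O(M^{-1})$ bounded below (this is where $M$ is chosen), apply Lemma~\ref{le4.2} in $d+1$ variables to get $(1+|\lambda_1|)^{-(d-1)/2}$, and recover the value at $\eta=1$ by the Sobolev/fundamental-theorem-of-calculus trick. Without this (or an equivalent Fourier-integral-operator argument) your Case~B does not close.
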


\begin{proof}
If $|\lambda_{1}|=1$, \eqref{e4.25} can be proven by applying Lemma \ref{le4.2} in the first $k$ ($k\geq2$) variables.
It remains to show \eqref{e4.25} if $|\lambda_{1}|\neq1$.
Let us first prove \eqref{e4.25} for $M|\lambda_{2}|\geq |\lambda_{1}|$ where $M>0$ is a constant  chosen later. In this case, we  write
\begin{align*}
T_{\lambda_{1},\lambda_{2}}g(x)=\lambda_{1}\int_{\R^{d+1}} \chi_{1}(x-y)e^{i\lambda_{1}\Psi_{\lambda_{1},\lambda_{2}}(x,y)}\phi(\sigma)g(y)dyd\sigma,
\end{align*}
where the phase function $\Psi_{\lambda_{1},\lambda_{2}}$ is given by
$$
\Psi_{\lambda_{1},\lambda_{2}}(x,y)=\sigma(x_{1}-y_{1}-\Gamma_1(x'-y'))-\lambda_{1}^{-1}\lambda_{2} x^{T}Jy.
$$
We have
\begin{align*}
\partial_{x,y}^{2}\Psi_{\lambda_{1},\lambda_{2}}(x,y)= \sigma P_{d-1}\Gamma''_1(x'-y')P_{d-1}^{T}-\lambda_{1}^{-1}\lambda_{2}J,
\end{align*}
where $P_{d-1}$ is given in \eqref{e3.1}.
We apply  \eqref{e3.4} of Lemma \ref{le3.1}  by taking $Q=\partial_{x'}^{2}\Gamma_1(x'-y')$ and  $a=-\sigma^{-1}\lambda_{1}^{-1}\lambda_{2}$
to see that there exists a $c_{1}>0$ such that
\begin{align*}
|\partial_{x,y}^{2}\Psi_{\lambda_{1},\lambda_{2}}(x,y)X|
=\sigma|(P_{d-1}\partial_{x'}^{2}\Gamma_1(x'-y')P_{d-1}^{T}-\sigma^{-1}\lambda_{1}^{-1}\lambda_{2}J)X|\geq c_{1}>0,
\end{align*}
for all $|X|=1$, $x-y\in\supp \chi_{1}$ and $\sigma\in\supp \phi$.
Notice that
\begin{align*}
|\partial_{x,y}^{\alpha}\Psi_{\lambda_{1},\lambda_{2}}(x,y)|
= |-\sigma\partial_{x,y}^{\alpha}\Gamma_1(x'-y')|\leq C_{\alpha}\  \textrm{\ for all } \ |\alpha|\geq3.
\end{align*}
Define
\begin{align*}
T_{\lambda_{1},\lambda_{2}}^{\sigma}g(x)=\lambda_{1}\int \chi_{1}(x-y)e^{i\lambda_{1}\Psi_{\lambda_{1},\lambda_{2}}(x,y)}
g(y)dy.
\end{align*}
By Fubini's theorem and H\"older's inequality, we have
$$|T_{\lambda_{1},\lambda_{2}}g(x)|=\big|\int T_{\lambda_{1},\lambda_{2}}^{\sigma}g(x)\phi(\sigma)d\sigma\big|
\leq C \left(\int \big|T_{\lambda_{1},\lambda_{2}}^{\sigma}g(x)\big|^{2}\phi(\sigma)d\sigma\right)^{1/2}.$$
These, together with Lemma \ref{le4.2}, tell us that
\begin{align}\label{e4.27}
\big\|\chi_{1}(x)T_{\lambda_{1},\lambda_{2}}g(x)\big\|_{L^{2}_{x}(\mathbb{R}^{d})}&\leq C
\left(\int\big\|\chi_{1}(x)T_{\lambda_{1},\lambda_{2}}^{\sigma}g(x)\big\|_{L^{2}_{x}(\mathbb{R}^{d})}^{2}\phi(\sigma)d\sigma\right)^{1/2}\nonumber\\
&\leq C
|\lambda_{1}
|\cdot(1+|\lambda_{1}|)^{-\frac{d}{2}}\left(\int||g(y)||_{L^{2}_{y}(\mathbb{R}^{d})}^{2}\phi(\sigma)d\sigma\right)^{1/2}\nonumber\\
&\leq C (1+|\lambda_{1}|)^{-\frac{d-2}{2}}||g||_{L^{2}(\mathbb{R}^{d})}.
\end{align}

We rewrite
\begin{align*}
T_{\lambda_{1},\lambda_{2}}g(x)=\lambda_{1}\int_{\R^{d+1}}
\chi_{1}(x-y)e^{i\lambda_{2}\widetilde{\Psi}_{\lambda_{1},\lambda_{2}}(x,y)}\phi(\sigma)g(y)dyd\sigma,
\end{align*}
where $\widetilde{\Psi}_{\lambda_{1},\lambda_{2}}(x,y)=\lambda_{1}\lambda_{2}^{-1}\sigma(x_{1}-y_{1}-\Gamma_1(x'-y'))- x^{T}Jy$.
Then we have $|\partial_{x,y}^{\alpha}\widetilde{\Psi}_{\lambda_{1},\lambda_{2}}(x,y)|\leq C_{\alpha}$ for all $|\alpha|\geq2$ and
\begin{align*}
\partial^2_{(x_{1},...,x_{k}),(y_{1},...,y_{k})}\widetilde{\Psi}_{\lambda_{1},\lambda_{2}}(x,y)
= \sigma \lambda_{1}\lambda_{2}^{-1} E_{k}^{T}P_{d-1}\partial_{x'}^{2}\Gamma_1(x'-y')P_{d-1}^{T}E_{k}-J_{k}.
\end{align*}
We apply \eqref{e3.3} of Lemma \ref{le3.1} with $G=E_{k}^{T}P_{d-1}\partial_{x'}^{2}\Gamma_1(x'-y')P_{d-1}^{T}E_{k}$
and $a=-\sigma \lambda_{1}\lambda_{2}^{-1}$,
to obtain
\begin{align*}
|\partial^2_{(x_{1},...,x_{k}),(y_{1},...,y_{k})}\widetilde{\Psi}_{\lambda_{1},\lambda_{2}}(x,y)X|=|(aG+J_{k})X|\geq c_{0}>0
 \textrm{\ for all $|X|=1$, $X\in \mathbb{R}^{k}$.}
\end{align*}
By applying Lemma \ref{le4.2} in the first $k (k\geq 2)$ variables and Fubini's theorem, we have
\begin{align}\label{e4.28}
\|\chi_{1}(x)T_{\lambda_{1},\lambda_{2}}g(x)\|_{L^{2}_{x}(\mathbb{R}^{d})}
\leq C
|\lambda_{1}|\cdot (1+|\lambda_{2}|)^{-1}\|g\|_{L^{2}(\mathbb{R}^{d})}.
\end{align}
By \eqref{e4.27} and \eqref{e4.28}, for all $\theta\in[0,1]$, we have
\begin{align*}
||\chi_{1} T_{\lambda_{1},\lambda_{2}}g||_{L^{2}(\mathbb{R}^{d})}
\leq C (1+|\lambda_{1}|)^{-(\frac{d-2}{2}-\frac{d}{2}\theta)}(1+|\lambda_{2}|)^{-\theta}||g||_{L^{2}(\mathbb{R}^{d})}.
\end{align*}
By translation invariance argument similarly as in \cite[p. 236]{St2},
\begin{align*}
||T_{\lambda_{1},\lambda_{2}}g||_{L^{2}(\mathbb{R}^{d})}\leq C (1+|\lambda_{1}|)^{-(\frac{d-2}{2}-\frac{d}{2}\theta)}
(1+|\lambda_{2}|)^{-\theta}||g||_{L^{2}(\mathbb{R}^{d})}.
\end{align*}

Next we consider the case $M|\lambda_{2}|\leq |\lambda_{1}|$. If $|\lambda_{1}|\leq1$, we have
\begin{align*}
|T_{\lambda_{1},\lambda_{2}}g(x)|\leq C \chi_{1}\ast|g|(x).
\end{align*}
This implies that
$
\|T_{\lambda_{1},\lambda_{2}}g\|_{L^{2}(\mathbb{R}^{d})}\leq C \|\chi_{1}\|_{L^{1}(\mathbb{R}^{d})}\|g\|_{L^{2}(\mathbb{R}^{d})}.
$
Next we consider $|\lambda_{1}|\geq1$ and use the method in \cite[Chapter XI, p. 500]{St2}.
Choose a function $\psi\in C^{\infty}_{c}(\mathbb{R})$ with $\supp \psi\subseteq [1/2,2]$ and equal to $1$ on $[\frac{3}{4},\frac{5}{4}]$. Define
\begin{align*}
{\mathscr A}_{\lambda_{1},\lambda_{2}}g(x,\eta)=\lambda_{1}\psi(\eta)\int \chi_{1}(x-y)e^{i\lambda_{1}\Psi_{\lambda_{1},\lambda_{2}}(x,\eta,y,\sigma)}
\phi(\sigma)g(y)dyd\sigma,
\end{align*}
where
\begin{align*}
\Psi_{\lambda_{1},\lambda_{2}}(x,\eta,y,\sigma)=\sigma \eta(x_{1}-y_{1}-\Gamma_1(x'-y'))-\lambda_{1}^{-1}\lambda_{2} x^{T}Jy.
\end{align*}
Observe that
\begin{align*}
\partial_{(x,\eta),(y,\sigma)}^{2}\Psi_{\lambda_{1},\lambda_{2}}(x,\eta,y,\sigma)
=\left(
\begin{array}{cccccc}
0 & 0& \eta \\
0 & \sigma \eta\partial_{x'}^{2}\Gamma_1 & \eta(\nabla_{x'}\Gamma_1)^{T}\\
\sigma & \sigma\nabla_{x'}\Gamma_1 & \partial_{\eta,\sigma}^{2}\Psi_{\lambda_{1},\lambda_{2}}\\
\end{array}
\right)-
\lambda_{1}^{-1}\lambda_{2}\left(\begin{array}{cccccc}
J & 0 \\
0 & 0\\
\end{array}
\right),
\end{align*}
which implies
$$
\det\partial_{(x,\eta),(y,\sigma)}^{2}\Psi_{\lambda_{1},\lambda_{2}}(x,\eta,y,\sigma)
=(\sigma \eta)^{d}\det\partial_{x'}^{2}\Gamma_1+O(
\lambda_{1}^{-1}\lambda_{2})
$$
and
\begin{align*}
|\partial_{x,\eta,y,\sigma}^{\alpha}\Psi(x,\eta,y,\sigma)|\leq C_{\alpha} \textrm{\ for all $|\alpha|\geq2$}.
\end{align*}
Then by \eqref{e4.3}, for all $x-y\in\supp \chi_{1}$, $\sigma\in\supp \phi$ and $\eta\in\supp \psi$, we have
\begin{align*}
|\det\partial_{(x,\eta),(y,\sigma)}^{2}\Psi_{\lambda_{1},\lambda_{2}}(x,\eta,y,\sigma)|&\geq
c_{H}^{(1)}(\sigma \eta)^{d}
-C\lambda_{1}^{-1}\lambda_{2}
 \geq c_{H}^{(1)}4^{-d}
-CM^{-1}\geq \frac{c_{H}^{(1)}}{2}4^{-d},
\end{align*}
where we choose $M:= {4^{d+1}C}{[c_{H}^{(1)}]^{-1}}$ and $c_{H}^{(1)}$ is the one defined in \eqref{e4.3}.
Thus it follows from Lemma \ref{le4.2} that
\begin{align*}
||\chi_{1}(x) {\mathscr A}_{\lambda_{1},\lambda_{2}}g(x,\eta)||_{L^{2}(\mathbb{R}^{d+1})}
\leq& C |\lambda_{1}|(1+|\lambda_{1}|)^{-\frac{d+1}{2}}||\phi(\sigma) g(y)||_{L^{2}(\mathbb{R}^{d+1})}\\
\leq& C (1+|\lambda_{1}|)^{-\frac{d-1}{2}}||g||_{L^{2}(\mathbb{R}^{d})}.
\end{align*}
By translation invariance argument again,
\begin{align}\label{e4.29}
\sup\limits_{\eta\in[1/2,2]}||{\mathscr A}_{\lambda_{1},\lambda_{2}}g(x,\eta)||_{L^{2}(\mathbb{R}^{d+1})}
\leq C (1+|\lambda_{1}|)^{-\frac{d-1}{2}}||g||_{L^{2}(\mathbb{R}^{d})}.
\end{align}
Since
\begin{align*}
\frac{d}{d\eta} {\mathscr A}_{\lambda_{1},\lambda_{2}}g(x,\eta)&=-\lambda_{1}\psi(\eta)\int \chi_{1}(x-y)e^{i\lambda_{1}\Psi(x,\eta,y,\sigma)}
\frac{d}{d\sigma}(\frac{\sigma}{\eta}\phi(\sigma))g(y)dyd\sigma \nonumber\\
&+\lambda_{1}\psi'(\eta)\int \chi_{1}(x-y)e^{i\lambda_{1}\Psi(x,\eta,y,\sigma)}\phi(\sigma)g(y)dyd\sigma,
\end{align*}
it is seen that
\begin{align}\label{e4.30}
\sup\limits_{\eta\in[1/2,2]}\big\|\frac{d}{d\eta} {\mathscr A}_{\lambda_{1},\lambda_{2}}g(x,\eta)\big\|_{L^{2}(\mathbb{R}^{d+1})}
\leq C (1+|\lambda_{1}|)^{-\frac{d-1}{2}}||g||_{L^{2}(\mathbb{R}^{d})}.
\end{align}
By   H\"older's inequality, we have
$$\sup\limits_{\eta\in[1,2]}|{\mathscr A}_{\lambda_{1},\lambda_{2}}g(x,\eta)|^{2}
\leq 2\bigg(\int_{1}^{2}|{\mathscr A}_{\lambda_{1},\lambda_{2}}g(x,\eta)|^{2}d\eta\bigg)^{1/2}
\bigg(\int_{1}^{2}|\frac{d}{d\eta} {\mathscr A}_{\lambda_{1},\lambda_{2}}g(x,\eta)|^{2}d\eta\bigg)^{1/2}
+|{\mathscr A}_{\lambda_{1},\lambda_{2}}g(x,1)|^{2},$$
which, together with \eqref{e4.29}, \eqref{e4.30} and   H\"older's inequality in $x$, implies us that
\begin{align*}
\|T_{\lambda_{1},\lambda_{2}}g\|_{L^{2}(\mathbb{R}^{d})}=\|{\mathscr A}_{\lambda_{1},\lambda_{2}}g(x,1)\|_{L^{2}(\mathbb{R}^{d})}
\leq C (1+|\lambda_{1}|)^{-\frac{d-1}{2}}||g||_{L^{2}(\mathbb{R}^{d})}.
\end{align*}
This finishes the proof of \eqref{e4.25}.
\end{proof}

\subsubsection{Proof of \eqref{e4.15} and \eqref{e4.16} }

\begin{proof} [{\rm PROOF OF \eqref{e4.15}}]
By Plancherel's theorem, \eqref{e4.24}, Fubini's theorem and Lemma~\ref{le4.3}, we have
\begin{align*}
\|f\ast K^{j,k}\|_{L^{2}(\mathbb{R}^{d+1})}&=\|\mathcal{F}_{u}(f\ast K^{j,k})\|_{L^{2}(\mathbb{R}^{d+1})}\\
&=\big\|T_{2^{j}, \lambda_2}\big(\mathcal{F}_{v}f(\cdot, \lambda_2)\big)(x)\big\|_{L^{2}_{\lambda_2}(\mathbb{R}^{2})L^{2}_{x}(\mathbb{R}^{d})}\nonumber\\
&\leq C 2^{-(1-\frac{d}{2}\theta)\frac{d-2}{2}j}2^{-\theta k}
\|\mathcal{F}_{v}f(\cdot, \lambda_2)\|_{L^{2}_{ \lambda_2}(\mathbb{R}^{2})L^{2}_{x}(\mathbb{R}^{d})}\\
&\leq C2^{-(1-\frac{d}{2}\theta)\frac{d-2}{2}j}2^{-\theta k}\|f\|_{L^{2}(\mathbb{R}^{d+1})}
\end{align*}
for all $j,k\geq 0 $ and $\theta\in[0,1]$. This, together with \eqref{e4.9}, yields that \eqref{e4.15} holds.

\end{proof}

\begin{proof}[{\rm PROOF OF \eqref{e4.16}}]
By a scaling argument, the proof of \eqref{e4.16} reduces to show for all $n\leq0$,
\begin{align}\label{e4.31}
	\left\|f\ast  (\KKone  )^{\ast} \ast \KKone_{2^{n}}  \right\|_{L^{2}(\mathbb{R}^{d+1})}\leq C  2^{4j+4k}2^{n}\|f\|_{L^{2}(\mathbb{R}^{d+1})}
\end{align}
and \begin{align}\label{e4.32}
	\left\|f\ast  (\KKone_{2^{n}})^{\ast} \ast \KKone \right\|_{L^{2}(\mathbb{R}^{d+1})}\leq C  2^{4j+4k}2^{n}\|f\|_{L^{2}(\mathbb{R}^{d+1})}.
\end{align}

We start with the proof of \eqref{e4.31}. We use  integration by parts   to see that for all $|\alpha|\leq 1 $ and $N>0,$
there exists a constant $C=C(\alpha, N)>0$ such that
\begin{align} \label{e4.33}
|\partial_{x,u}^{\alpha}\KKone(x,u)| + |\partial_{x,u}^{\alpha}(\KKone)^{\ast}(x,u)|
\leq C 2^{2j+2k}\chi_{1}(x)\bigg(1+|u|\bigg)^{-N}.
\end{align}
Note that  $|x''^{T}E_{d-1}^{T}JE_{d-1}y''|\leq2c\|J\|\cdot|y''|\leq 2c^{2}\|J\|$ if $|x''|,|y''|\leq c$, where
$x''=(x_{1},x_{2},....,x_{d-1})$ and $y''=(y_{1},y_{2},....,y_{d-1})$.
These, in combination the condition  $x^{T}Jy=x''^{T}E_{d-1}^{T}JE_{d-1}y''$
and the cancellation property \eqref{e4.11} of the kernel of $\KKone$, yield
\begin{eqnarray*}
|(\KKone)^{\ast}\ast \KKone_{2^{n}}(x,u)|
 &=&\big|\int_{\mathbb R^{d+1}} (\KKone)^{\ast}(x-y,u-v-x^{T}Jy)\KKone_{2^{n}}(y,v) dydv\big|\nonumber\\
&=&\big|\int_{\mathbb R^{d+1}} \bigg((\KKone)^{\ast}(x-y,u-v-x''^{T}E_{d-1}^{T}JE_{d-1}y'')-(\KKone)^{\ast}(x,u)\bigg) \KKone_{2^{n}}(y,v) dydv\big|\\
 &\leq&
 \int_{0}^{1}\int_{\R^{d+1}}2^{2j+2k}\bigg(1+|x-\theta y|+|u-\theta v|\bigg)^{-N}
\big(|y|+|v|\big)
 \big|\KKone_{2^{n}}(y,v)\big|dydvd\theta.
\end{eqnarray*}
This gives that for $n\leq 0$,
\begin{eqnarray*}
 \int_{\mathbb R^{d+1}}|(\KKone)^{\ast}\ast \KKone_{2^{n}}(x,u)|dxdu
&\leq& C 2^{2j+2k}\int_{\mathbb R^{d+1}}\big(|y|+|v|\big)\big|\KKone_{2^{n}}(y,v)\big|dydv\nonumber\\
&\leq& C^{2}2^{4j+4k}2^{n}.
\end{eqnarray*}
By the Schur lemma, \eqref{e4.31} follows readily.

Next we prove \eqref{e4.32}.
We use the cancellation property of $(\KKone)^\ast$
and a similar argument as above to show that
$$
\int_{\mathbb R^{d+1}} |(\KKone_{2^{n}})^{\ast}\ast \KKone(x,u)|dxdu\leq C 2^{4j+4k}2^{n}.
$$
This, together with the Schur lemma,  gives the desired estimate \eqref{e4.32}.
\end{proof}

\subsubsection{Proof of \eqref{e4.17} and  \eqref{e4.18}}

\begin{proof} [{\rm PROOF OF \eqref{e4.17}}]
  It suffices to show
\begin{align*}
	\left\|f\ast \left[s\frac{d}{ds}\Kone_s\right]_{s=2^{n}t}\right\|_{L^{2}(\mathbb{R}^{d+1})}
	\leq C2^{j}C_{j,k}(\theta)\|f\|_{L^{2}(\mathbb{R}^{d+1})}.
\end{align*}
By \eqref{e4.4} and change of variables in $\sigma,\tau$, we have
\begin{align*}
\Kone_{s}(x,u)
 =s^{-d+1}\chi_{1}(s^{-1}x)\iint_{\R\times\R} e^{i\sigma(x_{1}-s\Gamma_1(s^{-1}x'))}e^{i\tau u}
\phi_{j}(s\sigma)\phi_{k}(s^{2}\tau)d\sigma d\tau.
\end{align*}
Observe that
\begin{eqnarray}\label{e4.34}
s\frac{d}{ds}\Kone_s=(K^{(1)})^{j,k}_s  +  (K^{(2)})^{j,k}_s  +2^{j}(K^{(3)})^{j,k}_s
\end{eqnarray}
where
\begin{eqnarray*}
(K^{(1)})^{j,k}_s &=&
\left[(-d+1)s^{-d+1}\chi_{1}(s^{-1}x) - s^{-1}x\chi_{1}'(s^{-1}x) \right]    s^{-d+1}\iint_{\R\times\R}e^{i\sigma(x_{1}-s\Gamma_1(s^{-1}x'))}e^{i\tau u}
\phi_{j}(s\sigma)\phi_{k}(s^{2}\tau)d\sigma d\tau,\\
 (K^{(2)})^{j,k}_s &=&
s^{-d+1}\chi_{1}(s^{-1}x)\iint_{\R\times\R} e^{i\sigma(x_{1}-s\Gamma_1(s^{-1}x'))}e^{i\tau u}\left[2^{-j}s\sigma
\phi'_{j}(s\sigma)\phi_{k}(s^{2}\tau)  +2^{-k+1} s^{2}\tau \phi_{j}(s\sigma)\phi'_{k}(s^{2}\tau)\right]  d\sigma d\tau,
\\
  (K^{(3)})^{j,k}_s&=& \left[is^{-1}x\Gamma_1'(s^{-1}x')   -i\Gamma_1(s^{-1}x')\right]     s^{-d+1}\chi_{1}(s^{-1}x)
   \iint_{\R\times\R} e^{i\sigma(x_{1}-s\Gamma_1(s^{-1}x'))}e^{i\tau u}
2^{-j}s\sigma\phi_{j}(s\sigma)\phi_{k}(s^{2}\tau)d\sigma d\tau.
\end{eqnarray*}

By Plancherel's theorem, Fubini's theorem and Lemma~\ref{le4.3}, an argument as in the proof of \eqref{e4.15} yields that  for all $i=1, 2, 3$,
\begin{align*}
\|f\ast (K^{(i)})^{j,k}\|_{L^{2}(\mathbb{R}^{d+1})}\leq C C_{j,k}(\theta)\|f\|_{L^{2}(\mathbb{R}^{d+1})}, \ \ \ \ \forall
 \theta\in[0,1],
 \end{align*}
which, together with a scaling argument, implies that
\begin{align*}
\sup_{t\in[1,2]}\big\|f\ast \left[s\frac{d}{ds}\Kone_s\right]_{s=2^{n}t}\big\|_{L^{2}(\mathbb{R}^{d+1})}
&\leq 2^{j}\sum_{i=1}^{6}\sup_{t\in[1,2]}\big\| f\ast (K^{(i)})^{j,k}_{2^{n}t}\big\|_{L^{2}(\mathbb{R}^{d+1})}\nonumber\\
&\leq  C2^{j}C_{j,k}(\theta)\|f\|_{L^{2}(\R^{d+1})}.
\end{align*}
This completes the proof of \eqref{e4.17}.
\end{proof}

\begin{proof} [{\rm PROOF OF \eqref{e4.18}}]
 The proof is similar as \eqref{e4.16} and thus we only sketch it. By \eqref{e4.11}, $s\frac{d}{ds}\KKone_s$ and $\big(s\frac{d}{ds}\KKone_s\big)^{*}$
also have   integral zero. By \eqref{e4.34} and \eqref{e4.9}, an argument as in \eqref{e4.33}, we see that for all $|\alpha|\leq 1 $ and $N>0$,
there exists a constant $C=C(\alpha, N)>0$ such that
\begin{align*}
\left|\partial_{x}^{\alpha_{1}}\partial_{u}^{\alpha_{2}}\left(s\frac{d}{ds}\KKone_{s}\right)(x,u)\right|
+ \left|\partial_{x}^{\alpha_{1}}\partial_{u}^{\alpha_{2}}\left(s\frac{d}{ds}\KKone_{s}\right)^{\ast}(x,u)\right|
\leq C 2^{3j+3k}s^{-d-2-|\alpha_{1}|-2|\alpha_{2}|}\chi_{1}(s^{-1}x)\big(1+s^{-2}|u|\big)^{-N},
\end{align*}
where $\alpha=(\alpha_{1},\alpha_{2})$.
It suffices to show for all $n,n'\in \mathbb{Z}$,
\begin{align}\label{527-e4.34}
\int_{\R^{d+1}}\bigg| \left(\left[s\frac{d}{ds}\KKone_s\right]_{s=2^{n}t}\right)^{\ast}
	\ast \left[s\frac{d}{ds}\KKone_s\right]_{s=2^{n'}t}(x,u)\bigg|dxdu
	\leq C2^{6j+6k}2^{-|n-n'|}||f||_{L^{2}(\mathbb{R}^{d+1})},
\end{align}
\eqref{e4.18} follows. By the cancelation property of $\left[s\frac{d}{ds}\KKone_s\right]_{s=2^{n}t}$
if $n\geq n'$ , $\left[s\frac{d}{ds}\KKone_s\right]_{s=2^{n'}t}$ if $n'\geq n$ and mean value theorem, \eqref{527-e4.34} follows.
\end{proof}

\subsection{ Weak type $(1,1)$-estimate } In this section, we will prove  the following result.
\begin{lemma}\label{le4.4}
Let  $j,k\geq 0$. For all $\alpha>0$, we have
\begin{align*}
\big| \big\{(x, u): \sup_{t>0}|f\ast \Kone_{t}(x, u)| > \alpha \big\} \big| \leq C (j+k)2^{j} \alpha^{-1} ||f||_{L^{1}(\mathbb{R}^{d+1})}.
\end{align*}
\end{lemma}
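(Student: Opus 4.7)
The plan is to prove Lemma~\ref{le4.4} by a non-isotropic Calder\'on--Zygmund decomposition of $f$ adapted to the dilation $\delta_t(x,u)=(tx,t^2u)$, combined with the $L^2$ bound from Lemma~\ref{le4.1}. Assume $f\geq 0$; at level $\alpha$, write $f=g+b$ with $b=\sum_Q b_Q$, where each $b_Q$ is supported in a non-isotropic ball $Q$ of scale $r_Q$ (length $r_Q$ in $x$ and $r_Q^2$ in $u$), has mean zero and satisfies $\|b_Q\|_1\leq C\alpha|Q|$; we also have $\sum_Q|Q|\leq C\alpha^{-1}\|f\|_1$ and $\|g\|_\infty\leq C\alpha$, $\|g\|_1\leq C\|f\|_1$.

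For the good part, I would apply Lemma~\ref{le4.1} with $\theta=0$, noting that $C_{j,k}(0)\leq 1$ for $d\geq 4$. This yields $\|\sup_t|g\ast K^{j,k}_t|\|_{L^2}\leq C(j+k)\,2^{j/2}\|g\|_2$. Since $\|g\|_2^2\leq\|g\|_\infty\|g\|_1\leq C\alpha\|f\|_1$, Chebyshev's inequality gives a contribution $\leq C(j+k)^2 2^j\alpha^{-1}\|f\|_1$ to the exceptional set, matching the claim up to a negligible logarithmic factor.

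For the bad part, let $Q^*$ be a non-isotropic enlargement of $Q$ (dilated by a constant depending on $\|J\|$) and set $E=\bigcup_Q Q^*$, so $|E|\leq C\alpha^{-1}\|f\|_1$. It suffices to show
\[
\int_{E^c}\sup_t|b_Q\ast K^{j,k}_t(x,u)|\,dxdu\leq C 2^j\|b_Q\|_1,
\]
which, summed over $Q$ and combined with Chebyshev, gives the weak-type bound on $E^c$. To obtain the $2^j$ loss rather than the naive $2^{j+k}$, I would exploit the factorisation $K^{j,k}(x,u)=\tilde K^j(x)G_k(u)$, with $\tilde K^j(x)=\chi_1(x)\mathcal F^{-1}(\phi_j)(x_1-\Gamma_1(x'))$ and $G_k(u)=\mathcal F^{-1}(\phi_k)(u)$. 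Since $|G_k|$ is dominated by a radially decreasing $L^1$ bump at scale $2^{-k}$, convolution in the $u$-variable with $G_{k,t^2}(u)=t^{-2}G_k(u/t^2)$ is controlled, uniformly in $t$, by the one-variable Hardy--Littlewood maximal function $M_v$:
\[
|f\ast K^{j,k}_t(x,u)|\leq C\int|\tilde K^j_t(x-y)|\,M_v f(y,\,u+x^T Jy)\,dy,
\]
so the $2^k$ factor from $\|G_k\|_\infty$ is absorbed into the bounded operator $M_v$ and never enters the weak-type constant.

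The main obstacle is handling the resulting $x$-variable maximal operator together with the non-Abelian shift $x^T Jy$ appearing in the $u$-argument of $M_v f$. The kernel $\tilde K^j_t$ is a slab in $\R^d$ of thickness $\sim 2^{-j}t$ in the direction normal to $\Sigma$ and transverse size $\sim t$; by a Vitali-type covering argument, the maximal operator $g\mapsto\sup_t\int|\tilde K^j_t(x-y)|g(y)\,dy$ has weak-type $(1,1)$ bound $\leq C2^j$ on $\R^d$, since each slab occupies a fraction $2^{-j}$ of the corresponding non-isotropic ball. The delicate part is to transfer this slab-maximal bound to the full problem in $\R^{d+1}$ despite the $x$-dependent shift $x^T Jy$. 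This is accomplished by restricting to dyadic ranges $t\in[2^n,2^{n+1}]$ with $2^n\geq r_Q$ (on smaller $t$, the support of $b_Q\ast K^{j,k}_t$ lies inside $Q^*\subset E$), then using $\int b_Q=0$ together with the $L^1$-smoothness bounds $\int|\nabla_x K^{j,k}_t|\,dxdu\leq C 2^j/t$ and $\int|\partial_u K^{j,k}_t|\,dxdu\leq C 2^k/t^2$ (noting that $|x|\leq Ct$ on the effective support, so the contribution $(J^Tx)\partial_u$ from differentiating the shift contributes at most $2^k/t$) to produce a gain of the form $(r_Q/t)^\varepsilon$ upon summation over dyadic scales. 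The key technical verification is that the enlargement $Q^*$ is large enough to absorb the geometric distortion of order $\|J\|\cdot|x|\cdot r_Q$ introduced by the group structure, so that the slab-maximal argument in $x$ can be applied without interference from $J$.
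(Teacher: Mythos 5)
Your overall plan --- a nonisotropic Calder\'on--Zygmund decomposition at height $\alpha$, the $L^2$ bound of Lemma \ref{le4.1} for the good part, and the mean-zero property of $b_Q$ played against $L^1$-smoothness of the kernel, summed over dyadic scales, for the bad part --- coincides with the paper's proof, which packages the bad-part estimate as the H\"ormander-type condition
\begin{equation*}
\int_{\rho(x,u)\ge 2c_J\rho(y,v)}\sup_{s>0}\big|K^{j,k}_s\big((y,v)^{-1}(x,u)\big)-K^{j,k}_s(x,u)\big|\,dx\,du\le C(j+k)2^j,\qquad \rho(x,u)=|x|+|u|^{1/2}.
\end{equation*}
However, the step you single out as the crux does not work as written. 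Dominating $|b_Q\ast K^{j,k}_t|$ by $\int|\tilde K^j_t(x-y)|\,M_vb_Q(\cdots)\,dy$ replaces $b_Q$ by the nonnegative function $M_vb_Q$, which (a) destroys the cancellation $\int b_Q=0$ that your subsequent dyadic summation relies on, and (b) need not be integrable, since the one-variable Hardy--Littlewood operator $M_v$ is only of weak type $(1,1)$ and not bounded on $L^1$; composing the weak-$(1,1)$ slab maximal operator in $x$ with the weak-$(1,1)$ operator $M_v$ yields no weak-$(1,1)$ bound for the composition. So the factor $2^k$ is not ``absorbed'' this way, and the $L^1(E^c)$ estimate for the bad part cannot be routed through $M_vb_Q$.

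The detour is also unnecessary, because your premise that the direct argument loses $2^{j+k}$ is incorrect. The size bound needed per dyadic block is $\int\sup_{s\in[1,2]}|K^{j,k}_s|\,dx\,du\le C2^j$, which follows from \eqref{e4.37} and \eqref{e4.38}: the $u$-profile $\mathcal{F}^{-1}(\phi_k)$ is $L^1$-normalized, so no power of $2^k$ survives the $u$-integration, and the $\sup_s$ costs only the factor $2^j$ produced when $s\frac{d}{ds}$ hits the phase. The constant $2^{2j+2k}$ from the gradient bound \eqref{7-26-e4.38} enters only through the location of the crossover $r_Q/t\sim2^{-2j-2k}$ between the cancellation regime and the size regime, i.e.\ through the $O(j+k)$ intermediate dyadic scales, each contributing $O(2^j)$ --- exactly the computation in \eqref{e4.42}--\eqref{e4.43}, which is where the stated constant $C(j+k)2^j$ comes from. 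Two smaller corrections: $K^{j,k}_t$ is not compactly supported in $u$ (only Schwartz decay), so the claim that for $t\lesssim r_Q$ the convolution lives in $Q^*$ must be replaced by a tail estimate via \eqref{e4.37}; and your good-part bound produces $(j+k)^2$ rather than $(j+k)2^j$ via Chebyshev, which, as you note, is harmless for the application to Theorem \ref{th2.1}.
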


\begin{proof}
Let  $\rho(x,u):=|x|+|u|^{1/2}$. For all $(x,u)$, $(y,v)\in\R^{d+1}$, we have
$$
\rho ((x,u)(y, v))\leq c_{J}\left( \rho(x,u) + \rho(y,v)\right), \ \ \ \ {\rm with}\ c_{J}=\frac{3}{2}\|J\|^{1/2}+1.
$$
From this, we see that if $\rho(x,u)\geq 2c_{J}\rho(y,v)$, then
\begin{align}\label{e4.35}
  \rho\big((y,v)^{-1}(x,u)\big)\geq (2c_{J})^{-1}\rho(x,u).
\end{align}

Since we have already shown the $L^2$ bounds for the maximal function, from the Calder\'on-Zygmund theory  it suffices to verify
the following H\"ormander-type condition:
\begin{align*}
\int_{\rho(x,u)\geq 2c_{J}\rho(y,v)}\sup_{s>0}
\big|K^{j,k}_{s}\big((y,v)^{-1}(x,u)\big)-K^{j,k}_{s}(x,u)\big|dxdu\leq C (j+k)2^{j}.
\end{align*}
By a scaling argument, it suffices to show for all $\rho(y,v)\leq 2^{-n}\delta$ and $\delta>0$,
\begin{align}\label{e4.36}
\sum_{n\in \mathbb{Z}}\int_{\rho(x,u)\geq 2c_{J}2^{-n}\delta}\sup_{s\in[1,2]}
\big|K^{j,k}_{s}\big((y,v)^{-1}(x,u)\big)-K^{j,k}_{s}(x,u)\big|dxdu\leq C (j+k)2^{j}.
\end{align}

To show \eqref{e4.36}, we first note that from \eqref{e4.4}, we have that for any $j, k\geq 0,$
\begin{align}\label{e4.37}
\big|K^{j,k}(x,u)\big|\leq C2^{j}2^{k}\big(1+|x|+|u|^{1/2}\big)^{-N/2}\big(1+2^{j}|x_{1}-\Gamma_1(x')|\big)^{-N/2}\big(1+2^{k/2}|u|^{1/2}\big)^{-N/2},
\end{align}
\begin{align}\label{7-26-e4.38}
\sup_{s\in[1,2]}\big|\nabla_{x,u}K_{s}^{j,k}(x,u)\big|\leq C2^{2j}2^{2k}\big(1+|x|+|u|^{1/2}\big)^{-N/2},
\end{align}
for  large  $N\geq 1$; and by \eqref{e4.34},
\begin{align}\label{e4.38}
\big|\frac{d}{ds}\Kone_s(x,u)\big|
\leq C2^{2j}2^{k}(1+|x|+|u|^{1/2})^{-N/2} \bigg(1+2^{j}|s^{-1}x_{1}-\Gamma_1(s^{-1}x')|\bigg)^{-N/2}\big(1+2^{k/2}|u|^{1/2}\big)^{-N/2}.
\end{align}
 uniformly in $s\in[1,2]$. Now we consider two cases: $ 2^{-n}\delta\geq 1$ and $2^{-n}\delta\leq 1.$

 \smallskip

 \noindent
 {\bf Case 1: $2^{-n}\delta\geq 1$.}

   In this case,  by a triangle inequality, a change of variable, Newton-Leibniz formula,
   \eqref{e4.35}, \eqref{e4.37} and \eqref{e4.38}, we see that for all $\rho(y,v)\leq 2^{-n}\delta$,
\begin{align}\label{e4.39-520}
&\int_{\rho(x,u)\geq 2c_{J} 2^{-n}\delta}\sup_{s\in[1,2]}\big|K^{j,k}_{s}
\big((y,v)^{-1}(x,u)\big)-K^{j,k}_{s}(x,u)\big|dxdu\nonumber\\
&\leq\int_{\rho(x,u)\geq 2c_{J} 2^{-n}\delta}\sup_{s\in[1,2]}\bigg(\big|K^{j,k}_{s}
\big((y,v)^{-1}(x,u)\big)\big|+\big|K^{j,k}_{s}(x,u)\big|\bigg)dxdu\nonumber\\
&\leq
2\int_{\rho(x,u)\geq 2^{-n}\delta}\sup_{s\in[1,2]}
\big|K^{j,k}_{s}(x,u)\big|dxdu\nonumber\\
&\leq 2\int_{\rho(x,u)\geq 2^{-n}\delta}\big|\frac{d}{ds}\Kone_s(x,u)
\big|dxdu+2\int_{\rho(x,u)\geq 2^{-n}\delta}\big|K_{j,k}(x,u)\big|dxdu\nonumber\\
&\leq C(2^{-n}\delta)^{-N/2}2^{j},
\end{align}
where we use $2c_{J}\geq1$.
Then we have
\begin{align*}
\sum_{2^{-n}\delta\geq1}\int_{\rho(x,u)\geq 2c_{J}2^{-n}\delta}\sup_{s\in[1,2]}
\big|K^{j,k}_{s}\big((y,v)^{-1}(x,u)\big)-K^{j,k}_{s}(x,u)
\big|dxdu\leq C2^{j}.
\end{align*}

 \smallskip

 \noindent
 {\bf Case 2: $2^{-n}\delta\leq 1$.}

In this case, we will  show for all $\rho(y,v)\leq 2^{-n}\delta$,
\begin{align}\label{e4.39}
\sum_{2^{-n}\delta\leq1}\int_{\rho(x,u)\geq 2c_{J}2^{-n}\delta}\sup_{s\in[1,2]}
\big|K^{j,k}_{s}\big((y,v)^{-1}(x,u)\big)-K^{j,k}_{s}(x,u)\big|dxdu\leq C (j+k)2^{j}.
\end{align}
By the support condition of $K^{j,k}$, we can suppose $|x-y|$ or $|x|\leq C$. By $\rho(y,v)\leq 2^{-n}\delta\leq 1$,
we have $|y|\leq 1$ and $|v|\leq1$. As a result, one must have $|x|\leq C+1$ and
\begin{align}\label{e4.40}
|(y,v+y^{T}Jx)|\leq C\rho(y,v).
\end{align}
Let $\theta\in[0,1]$.
By \eqref{e4.35} and $\rho(x,u)\geq 2c_{J}\rho(y,v)\geq 2c_{J}\rho(\theta y,\theta v)$, we have
\begin{align*}
\rho((\theta y,\theta v)^{-1}(x,u))\geq (2c_{J})^{-1}\rho(x,u).
\end{align*}
This, together with Newton-Leibniz formula, \eqref{7-26-e4.38} and \eqref{e4.40}, tells us that
\begin{align*}
&\sup_{s\in[1,2]}\big|K^{j,k}_{s}\big((y,v)^{-1}(x,u)\big)-K^{j,k}_{s}(x,u)
\big|\nonumber\\
&\leq\sup_{s\in[1,2]}\int_{0}^{1}\big|\big(\nabla_{x,u}\Kone_s\big)\big((\theta y,\theta v)^{-1}(x,u)\big)\big|d\theta|(y,v+y^{T}Jx)|\nonumber\\
&\leq C 2^{2j+2k}\rho(y,v)\int_{0}^{1}\bigg(1+\rho((\theta y,\theta v)^{-1}(x,u))\bigg)^{-N/2}d\theta\nonumber\\
&\leq C 2^{2j+2k}2^{-n}\delta\bigg(1+\rho(x,u)\bigg)^{-N/2}.
\end{align*}
After integrating in $x,u$, we have
\begin{align}\label{e4.42}
 &\sum_{2^{-n}\delta\leq2^{-2j-2k}}\int_{\rho(x,u)\geq 2 c_{J}2^{-n}\delta}\sup_{s\in[1,2]}
\big|K^{j,k}_{s}\big((y,v)^{-1}(x,u)\big)-K^{j,k}_{s}(x,u)\big|dxdu \nonumber\\
&\leq C \sum_{2^{-n}\delta\leq2^{-2j-2k}}2^{2j+2k}2^{-n}\delta\leq C.
\end{align}
On another hand, we apply  \eqref{e4.37},
  \eqref{e4.38}   and a similar argument as in  the proof of \eqref{e4.39-520} to obtain
\begin{align}\label{e4.43}
&\sum_{2^{-2j-2k}\leq2^{-n}\delta\leq1}\int_{\rho(x,u)\geq 2c_{J}2^{-n}\delta}
\sup_{s\in[1,2]}\big|K^{j,k}_{s}\big((y,v)^{-1}(x,u)\big)-K^{j,k}_{s}(x,u)\big|dxdu\nonumber\\
&\leq\sum_{2^{-2j-2k}\leq2^{-n}\delta\leq1}\int_{\R^{d+1}}
\sup_{s\in[1,2]}\big|K^{j,k}_{s}\big((y,v)^{-1}(x,u)\big)|+\sup_{s\in[1,2]}|K^{j,k}_{s}(x,u)\big|dxdu\nonumber\\
&\leq 2 \sum_{2^{-2j-2k}\leq2^{-n}\delta\leq1}\int_{\R^{d+1}}\sup_{s\in[1,2]}
\big|K^{j,k}_{s}(x,u)\big| dxdu\nonumber\\
&\leq C \sum_{2^{-2j-2k}\leq2^{-n}\delta\leq1}2^{j}\leq C (j+k)2^{j},
\end{align}
where in the last inequality we use the fact that the summation has only
$O(j+k)$ terms. \eqref{e4.39} follows from \eqref{e4.42} and \eqref{e4.43}. Hence  \eqref{e4.36} is proved,
and then the proof of Lemma~\ref{le4.4} is completed.
\end{proof}

\subsection{ Proof of (i)  of Theorem~\ref{th2.1} }
\begin{proof}[Proof of (i)  of Theorem~\ref{th2.1} ]
First we consider $1<p\leq 2$. By \eqref{e4.6}, we have
$$\sup_{t>0}|f\ast (\mu_1)_{t}|\leq C\sum_{j=0}^{\infty} \sum_{k=0}^{\infty}\sup_{t>0}\big||f|\ast K^{j,k}_{t}\big|.$$

From \eqref{e4.7}, it suffices to consider that  $j+k\geq1$.
By interpolation, it follows from Lemmas  \ref{le4.1} and \ref{le4.4} that  for all $\theta\in[0,1]$,
$$
\bigg\|\sup_{t>0}\big||f|\ast K^{j,k}_{t}\big|\bigg\|_{L^{p}(\R^{d+1})}
\leq C (j+k) 2^{	[{1\over p}-(d-2)(1-\frac{1}{p})]j} 2^{ {1\over 4} d(d-2)\theta j}2^{-2 (1-\frac{1}{p})\theta k}\|f\|_{L^{p}(\R^{d+1})}.
$$
As a result, the summation in $j,k$ converges whenever  $ {(d-1)}/{(d-2)}<p\leq 2 $ with  $d\geq 4$  and  $\theta$ is
sufficiently small. By interpolation with a trivial $L^{\infty}$ estimate, we obtain the desired $L^{p}$
estimate in (i) of Theorem~\ref{th2.1}.
\end{proof}


\bigskip

 \section{Proof of (ii) of Theorem~\ref{th2.1}}
\setcounter{equation}{0}

In this  section, we will show that (ii) of Theorem~\ref{th2.1}. That is, for  $d\geq 3$,
the maximal operator $\sup_{t>0}|f\ast ( \mu_2)_{t} (x,u)|$ is bounded on $L^p(\mathbb R^{d+1})$
where $\mu_2$ is given as in \eqref{e2.3}, i.e.,
 \begin{eqnarray*}
\mu_2(x, u)=\chi_{\mu_2}(x,u)\iint_{\mathbb R \times \mathbb R} e^{i(\sigma(x_d- \Gamma_2(x'')) +\tau u)} d\sigma d\tau
\end{eqnarray*}
with  $x_{d}=\Gamma_2(x''),  x''= (x_{1},...x_{d-1})\ $  with  $\Gamma_2\in C^{\infty}(\mathbb R^{d-1})$
and  $\mu_2$ is supported in a small neighborhood of $(x_{0}'', \Gamma_2(x_{0}''))$ for some $x_{0}''\in \R^{d-1}$.

Since $\mu_2$ has a sufficiently small support,	
 we can choose a smooth nonnegative function $\chi_{2}$
  defined on $\mathbb{R}^{d-1}$,
which is equal to $1$ on the projection of $\supp \mu_{2}$ to $x''$-space and also has a small support.  By the curvature hypothesis and the convexity of $\Sigma$,
we can further assume $\partial_{x''}^{2}\Gamma_2(x'')$ is positive
definite for all $x''\in\supp \chi_{2}$  such that
\begin{eqnarray} \label{e5.1}
\inf_{x''\in\supp \chi_{2}}\left| {\rm det} \left(\partial_{x''}^{2}\Gamma_2(x'') \right) \right|\geq c_{H}^{(2)}
\end{eqnarray}
for some $c_{H}^{(2)}>0$. Let $\{\phi_k\}_{k=0}^{\infty} $ be given as in \eqref{e4.2}. Define for $j, k\geq 0$,
 \begin{eqnarray}\label{e5.2}
K^{j,k}(x,u)&=&\chi_{2}(x'')\iint_{\R\times\R} e^{i\sigma(x_{d}-\Gamma_2(x''))}e^{i\tau u}\phi_{j}(\sigma)\phi_{k}(\tau)d\sigma d\tau.
\end{eqnarray}
An argument as in \eqref{e4.6} yields that
\begin{align}\label{e5.3}
|f\ast (\mu_2)_{t}|\leq   C\sum_{j=0}^{\infty} \sum_{k=0}^{\infty}  |f|\ast K^{j,k}_{t}.
\end{align}

For  $j=k=0$,   we have that
 $
|K^{0,0}(x,u)|
 \leq C_N (1+|x|+|u|)^{-N}
$
for any $N>0$. From this we see that  $\sup_{t>0}|f\ast (K^{0,0})_{t}| $
  is controlled by the appropriate variant of the Hardy-Littlewood maximal function and therefore  \cite{St2}  we have the inequality
\begin{align}\label{e55}
\big|\big|\sup_{t>0}|f\ast (K^{0,0})_{t}|\big|\big|_{L^{p}(\mathbb{R}^{d+1})}
\leq C||f||_{L^{p}(\mathbb{R}^{d+1})}, \ \ \ \  1<p<\infty.
\end{align}
Consider  $j+k\geq 1$. We have that
\begin{align}\label{e5.4}
	\int_{\mathbb R^{d+1}} K^{j,k}(x,u)dxdu
	&=\int_{\mathbb R^{d+1}} \chi_{2}(x'')\mathcal{F}(\phi_{j})\big(x_{d}-\Gamma_2(x'')\big)\mathcal{F}
	(\phi_{k})(u)dxdu\nonumber\\
	&=\int_{\mathbb R^{d+1}} \chi_{2}(x'')\mathcal{F}(\phi_{j})\big(x_{d}\big)\mathcal{F}(\phi_{k})(u)dxdu=0,
\end{align}
since $\int_{\mathbb R}\mathcal{F}(\phi_{l})(s)ds=0$ if $l>0$.

\subsection{Square functions and almost orthogonality }

\begin{lemma}\label{le5.1}
	For all $j+k\geq 1 $ and $\theta\in[0,1]$, we have
	\begin{align}\label{e5.5}
		\big|\big|\sup_{t>0}|f\ast K^{j,k}_{t}|\big|\big|_{L^{2}(\mathbb{R}^{d+1})}
		\leq C (j+k)2^{-(\frac{d-2}{2}-\frac{d-1}{2}\theta)j}2^{-\theta k}||f||_{L^{2}(\mathbb{R}^{d+1})}.
	\end{align}
\end{lemma}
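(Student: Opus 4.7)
The proof mirrors that of Lemma~\ref{le4.1} but requires one essential new reduction because in case~(2) the graph direction $x_d$ lies in the kernel of $J$ (since $\mathrm{rank}\,J=k\leq d-1$). Observe that \eqref{e5.4} already gives $\int K^{j,k}=0$ for $j+k\geq 1$, so no mean-zero correction is needed. Writing $\sup_{t>0}|f\ast K^{j,k}_t|=\sup_{n\in\ZZ}\sup_{t\in[1,2]}|f\ast K^{j,k}_{2^nt}|$, Lemma~\ref{le2.2} reduces matters to bounding $(\sum_n\|f\ast K^{j,k}_{2^nt}\|_2^2)^{1/2}$ by $C(j+k)\alpha_{j,k}(\theta)\|f\|_2$ with $\alpha_{j,k}(\theta):=2^{-(d-1)(1-\theta)j/2-\theta k}$, together with its derivative analogue carrying an extra $2^j$-factor; the $\sqrt{A_1A_2}$ term in Lemma~\ref{le2.2} then produces the $2^{j/2}$ which, combined with $\alpha_{j,k}(\theta)$, yields exactly the exponent $-((d-2)/2-(d-1)\theta/2)j-\theta k$ claimed in Lemma~\ref{le5.1}. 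Via the Cotlar--Stein Lemma~\ref{le2.3} each square-function bound reduces to a single-scale $L^2$-estimate $\|f\ast K^{j,k}\|_2\leq C\alpha_{j,k}(\theta)\|f\|_2$ and almost-orthogonality estimates for $(K^{j,k})^\ast\ast K^{j,k}_{2^n}$; the latter are handled by the cancellation \eqref{e5.4} and the identity $x^TJy=x''^T\widetilde Jy''$ (with $\widetilde J$ the top-left $(d-1)\times(d-1)$ block of $J$), exactly as in \eqref{e4.16}--\eqref{e4.18}.

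The heart of the argument is the single-scale bound. Taking the Fourier transform in $u$ as in \eqref{e4.24} reduces it to estimating the oscillatory integral operator
$$T^{(2)}_{\lambda_1,\lambda_2}g(x)=\lambda_1\int\chi_2(x''-y'')\,e^{i\lambda_1\sigma(x_d-y_d-\Gamma_2(x''-y''))}\,e^{-i\lambda_2\,x^TJy}\,\phi(\sigma)\,g(y)\,dy\,d\sigma.$$
The mixed Hessian of $\Psi^{(2)}=\sigma(x_d-y_d-\Gamma_2(x''-y''))-\lambda_1^{-1}\lambda_2\,x^TJy$ has identically vanishing $d$-th row and column, so Lemma~\ref{le4.2} cannot be invoked in all $d$ variables. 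However, because $x^TJy$ depends only on $(x'',y'')$, the $x_d$-dependence of the integrand is the pure modulation $e^{i\lambda_1\sigma x_d}$; a Plancherel computation in $x_d$ pins $\xi_d=\lambda_1\sigma$, collapses the $\sigma$-integration through the resulting delta, and after Plancherel in $y_d$ yields $\|T^{(2)}_{\lambda_1,\lambda_2}\|_{L^2\to L^2}\lesssim\sup_{|\xi_d|\sim|\lambda_1|}\|S_{\xi_d,\lambda_2}\|_{L^2(\R^{d-1})\to L^2(\R^{d-1})}$, where
$$S_{\xi_d,\lambda_2}h(x'')=\int\chi_2(x''-y'')\,e^{-i\xi_d\Gamma_2(x''-y'')}\,e^{-i\lambda_2\,x''^T\widetilde Jy''}\,h(y'')\,dy''.$$

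Two oscillatory bounds on $S_{\xi_d,\lambda_2}$ together with interpolation then suffice. Its mixed Hessian $\xi_d\partial^2\Gamma_2-\lambda_2\widetilde J$ satisfies, by the skew-symmetry $X^T\widetilde JX=0$ and the positive-definiteness of $\partial^2\Gamma_2$,
$$\|(\xi_d\partial^2\Gamma_2-\lambda_2\widetilde J)X\|\geq|X^T(\xi_d\partial^2\Gamma_2-\lambda_2\widetilde J)X|=|\xi_d|\,X^T\partial^2\Gamma_2\,X\geq c|\xi_d|\quad(|X|=1),$$
so Lemma~\ref{le4.2} applied in all $d-1$ variables gives $\|S_{\xi_d,\lambda_2}\|\lesssim(1+|\xi_d|)^{-(d-1)/2}\sim(1+|\lambda_1|)^{-(d-1)/2}$, which is the bound at $\theta=0$. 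When $M|\lambda_2|\geq|\lambda_1|$, restricting to the first $k$ coordinates produces the block mixed Hessian $\lambda_2\bigl((\xi_d/\lambda_2)G-J_k\bigr)$ with $G=E_k^T\partial^2\Gamma_2E_k$ semi-positive definite and $|\xi_d/\lambda_2|\leq M$; Lemma~\ref{le3.1}(i) combined with Lemma~\ref{le4.2} in the $k\geq 2$ variables and Fubini on the compactly supported remaining $d-1-k$ variables gives $\|S_{\xi_d,\lambda_2}\|\lesssim(1+|\lambda_2|)^{-k/2}\leq(1+|\lambda_2|)^{-1}$. Interpolation of these two estimates yields $\|T^{(2)}_{\lambda_1,\lambda_2}\|\lesssim(1+|\lambda_1|)^{-(d-1)(1-\theta)/2}(1+|\lambda_2|)^{-\theta}$ in this regime; in the complementary regime $M|\lambda_2|\leq|\lambda_1|$ the first bound already dominates the target since $d\geq 3$ forces $(1+|\lambda_2|)^\theta\lesssim(1+|\lambda_1|)^\theta\leq(1+|\lambda_1|)^{(d-1)\theta/2}$. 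The main obstacle is carrying out the Plancherel-in-$x_d$ reduction cleanly and verifying the two Hörmander-type bounds with the right uniformity across both regimes; the derivative and almost-orthogonality estimates are then routine adaptations of \eqref{e4.34} and \eqref{e4.16}--\eqref{e4.18}.
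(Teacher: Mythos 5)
Your proposal is correct and follows essentially the same route as the paper: Lemma~\ref{le2.2} plus Cotlar--Stein reduce matters to a single-scale bound, which is handled by passing to the $(d-1)$-dimensional oscillatory integral operator $S_{\xi_d,\lambda_2}$ (the paper's $T_{\lambda_1,\lambda_2}$ of Lemma~\ref{le5.2}, obtained there by Fourier transforming in $(x_d,u)$ simultaneously rather than via your intermediate $d$-dimensional operator and Plancherel in $x_d$ --- the two reductions are equivalent), and then by the same two H\"ormander-type bounds, skew-symmetry for the $(1+|\xi_d|)^{-(d-1)/2}$ estimate and the $k\times k$ block for the $(1+|\lambda_2|)^{-1}$ estimate, followed by interpolation. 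The only minor divergence is that in the small-ratio regime you invoke Lemma~\ref{le3.1}(i) where the paper uses a direct perturbation of $J_k$; both are valid, and your exponent bookkeeping matches \eqref{e5.6}--\eqref{e5.7}.
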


 \begin{proof}
To show \eqref{e5.5},  one writes
	$$
	\sup_{t>0}|f\ast K^{j,k}_{t}|=\sup_{n\in \mathbb{Z}}\sup_{t\in[1,2]}|f\ast K^{j,k}_{2^{n}t}|.
	$$
	We apply  Lemma \ref{le2.2} with $F_{n}(x,u,t)=f\ast K^{j,k}_{2^{n}t}(x,u)$
	to see  that \eqref{e5.5} follows from the following estimates which are uniform in $t\in [1,2]$:
	\begin{align}\label{e5.6}
	\left(	\sum_{n\in \mathbb{Z}}\left\|f\ast K^{j,k}_{2^{n}t}\right\|_{L^{2}(\mathbb{R}^{d+1})}^{2}\right)^{1/2}
		\leq C (j+k)2^{-(1-\theta)\frac{(d-1)}{2}j}2^{-\theta k}||f||_{L^{2}(\mathbb{R}^{d+1})}
	\end{align}
	and
\begin{align}\label{e5.7}
	\left(	\sum_{n\in \mathbb{Z}}\left\|f\ast \left[s\frac{d}{ds}K^{j,k}_s\right]_{s=2^{n}t}\right\|_{L^{2}(\mathbb{R}^{d+1})}^{2} \right)^{1/2}
		\leq C (j+k)2^{-(\frac{d-3}{2}-\frac{d-1}{2}\theta)j}2^{-\theta k}||f||_{L^{2}(\mathbb{R}^{d+1})}.
\end{align}

Next we apply an almost orthogonality    Lemma~\ref{le2.3} to prove
 \eqref{e5.6} and \eqref{e5.7}. One sees that  the inequality \eqref{e5.6}
follows from  the following estimates \eqref{e5.8} and \eqref{e5.9} if we apply a scaling argument and Lemma~\ref{le2.3}
with $A=2^{-(1-\theta)\frac{d-1}{2}j}2^{-\theta k}$ and $B= 2^{2j+2k}$:
 	\begin{align}\label{e5.8}
\|f\ast K^{j,k}\|_{L^{2}(\mathbb{R}^{d+1})}\leq C 2^{-(1-\theta)\frac{d-1}{2}j}2^{-\theta k}||f||_{L^{2}(\mathbb{R}^{d+1})}, \ \ \ \ \forall
 		\theta\in[0,1]
 	\end{align}
  and
	\begin{align}\label{e5.9}
		\|f\ast  (K^{j,k}_{2^{n}})^{\ast} \ast K^{j,k}_{2^{n'} }\|_{L^{2}(\mathbb{R}^{d+1})}\leq C  2^{4j+4k}2^{-|n'-n|}\|f\|_{L^{2}(\mathbb{R}^{d+1})}
	\end{align}
 first for $n\leq n'$ and then by taking adjoints also for $n>n'$; The inequality \eqref{e5.7}
follows from  the following estimates \eqref{e5.10} and \eqref{e5.11} if we apply Lemma~\ref{le2.3}
with $A=2^{-(\frac{d-3}{2}-\frac{d-1}{2}\theta)j}2^{-\theta k}$ and $B=2^{3(j+k)}$:

\begin{align}\label{e5.10}
	\left\|f\ast \left[s\frac{d}{ds}K^{j,k}_s\right]_{s=2^{n}t}\right\|_{L^{2}(\mathbb{R}^{d+1})}
	\leq C2^{-(\frac{d-3}{2}-\frac{d-1}{2}\theta)j}2^{-\theta k}||f||_{L^{2}(\mathbb{R}^{d+1})},
\end{align}
 and
\begin{align}\label{e5.11}
	\left\|f\ast \left(\left[s\frac{d}{ds}K^{j,k}_s\right]_{s=2^{n}t}\right)^{\ast}
	\ast \left[s\frac{d}{ds}K^{j,k}_s\right]_{s=2^{n'}t} \right\|_{L^{2}(\mathbb{R}^{d+1})}
	\leq C2^{6j+6k}2^{-|n-n'|}||f||_{L^{2}(\mathbb{R}^{d+1})}.
\end{align}

The proof of \eqref{e5.8}, \eqref{e5.9}, \eqref{e5.10} and  \eqref{e5.11} will be given in Sections~\ref{sd37}
and  ~\ref{sd18} below, which are obtained by  using    the cancellation of the kernels  of $K^{j,k} $ and $s\frac{d}{ds}K^{j,k}_s$
to show almost orthogonality properties
for these operators and use certain estimates for oscillatory integrals to establish the decay estimates.
\end{proof}

\subsubsection{Reduction to oscillatory integral operators}

To  show   estimates  \eqref{e5.8}, \eqref{e5.9}, \eqref{e5.10} and  \eqref{e5.11}, we will reduce them to  estimates
for oscillatory integral operators. Recall that
  $E_{d-1}$ is a $d\times (d-1)$ matrix  and $F_{k}$ is a   $(d-1)\times k$ matrix with $k\in [2, d)$, which are given by
\begin{equation}\label{e5.12}
 	E_{d-1}=
	\left(
	\begin{array}{cccccc}
		I_{d-1} \\
		0 \\
	\end{array}
	\right), \ \ \ \
	F_{k}=
	\left(
	\begin{array}{cccccc}
		I_{k} \\
		0 \\
	\end{array}
	\right),
\end{equation}
respectively, and we denote $E_{d-1}^{T}$, $F_{k}^{T}$ as their transposes. We  write
\begin{eqnarray*}
	f\ast K^{j,k}(x, u)=\int_{\mathbb R^{d+1}} K^{j,k}(x-y,u-v+x''^{T}E_{d-1}^{T}JE_{d-1}y'') f(y, v) dydv,
\end{eqnarray*}
where
\begin{eqnarray*}
	K^{j,k}(x-y,u-v+x''^{T}E_{d-1}^{T}JE_{d-1}y'')
	=\chi_{2}(x''-y'')\iint_{\R\times\R} e^{i(\sigma(x_{d}-y_d-\Gamma_2(x''-y'')) + \tau (u-v+x''^{T}E_{d-1}^{T}JE_{d-1}y'') )}
	\phi_{j}(\sigma)\phi_{k}(\tau)d\sigma d\tau.
	\end{eqnarray*}
We then apply a Fourier transform on ${\mathbb R^2}$, in the $(x_d, u)$ variables of   $f\ast K^{j,k}(x, u),$
and use the fact that
$
K^{j,k}(x,u)=\chi_{2}(x'')\mathcal{F}^{-1} (\phi_{j})\big(x_{d}-\Gamma_2(x'')\big)\mathcal{F}^{-1}(\phi_{k})\big(u\big)
$
to obtain
\begin{align}\label{e5.14-521}
 	&\mathcal{F}_{d}\mathcal{F}_{u}(f\ast K^{j,k})(x'', \lambda_1, \lambda_2)\nonumber\\
 	&=\phi_{j}(\lambda_1)\phi_{k}(\lambda_2) 	\int_{\mathbb R^{d-1} }e^{-i(\lambda_1\Gamma_2(x''-y'') -\lambda_2 x''^{T}E_{d-1}^{T}JE_{d-1}y'') }
	(\mathcal{F}_{d}\mathcal{F}_{u}f)(y'',\lambda_1, \lambda_2)\chi_{2}(x''-y'') dy''.
\end{align}

We have the following result.

\begin{lemma} \label{le5.2}
Define
\begin{align*}
T_{\lambda_1, \lambda_2} g(x'')=\int_{\mathbb{R}^{d-1}} e^{-i(\lambda_{1}\Gamma_2(x''-y'')-\lambda_{2}
  x''^{T}E_{d-1}^{T}JE_{d-1}y'')}\chi_{2} (x''-y'')g(y'') dy'',
\end{align*}
 Then there exists a  constant $C>0$ depending on $\supp \chi_{2}$ and $C^{N}$ norms of $\chi_{2}$
 and $\Gamma_{2}$ on $\supp \chi_{2}$ for some sufficiently large  $N$ such that for all $\theta\in[0,1]$,
\begin{align*}
\|T_{\lambda_1, \lambda_2} g\|_{L^{2}(\mathbb{R}^{d-1})}\leq C (1+|\lambda_{1}|)^{-(1-\theta)\frac{d-1}{2}}
(1+|\lambda_{2}|)^{-\theta}\|g\|_{L^{2}(\mathbb{R}^{d-1})}.
\end{align*}
 \end{lemma}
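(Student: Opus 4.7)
The plan is to establish two uniform $L^{2}$-bounds on $T_{\lambda_{1},\lambda_{2}}$, namely
\begin{align*}
\|T_{\lambda_{1},\lambda_{2}}g\|_{L^{2}(\R^{d-1})}&\leq C(1+|\lambda_{1}|)^{-(d-1)/2}\|g\|_{L^{2}(\R^{d-1})},\\
\|T_{\lambda_{1},\lambda_{2}}g\|_{L^{2}(\R^{d-1})}&\leq C(1+|\lambda_{2}|)^{-1}\|g\|_{L^{2}(\R^{d-1})},
\end{align*}
each holding for every $\lambda_{1},\lambda_{2}\in\R$ with constants independent of them, and then to interpolate them geometrically with weight $\theta\in[0,1]$. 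The cases $|\lambda_{i}|\leq 1$ reduce to trivial $L^{1}$-kernel bounds since $\chi_{2}$ has compact support, so I focus on $|\lambda_{1}|,|\lambda_{2}|\geq 1$ below.

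For the first bound I would factor $\lambda_{1}$ from the phase, writing it as $\lambda_{1}\Psi$ with $\Psi(x'',y'')=-\Gamma_{2}(x''-y'')+(\lambda_{2}/\lambda_{1})\,x''^{T}E_{d-1}^{T}JE_{d-1}y''$. A direct computation yields
$$\partial^{2}_{x'',y''}\Psi=\partial^{2}\Gamma_{2}(x''-y'')+(\lambda_{2}/\lambda_{1})\,E_{d-1}^{T}JE_{d-1}.$$
Because $E_{d-1}^{T}JE_{d-1}$ is skew-symmetric, $X^{T}E_{d-1}^{T}JE_{d-1}X=0$ for every $X$, so for any unit $X\in\R^{d-1}$,
$$\bigl|\partial^{2}_{x'',y''}\Psi\,X\bigr|\geq\bigl|X^{T}\partial^{2}_{x'',y''}\Psi\,X\bigr|=X^{T}\partial^{2}\Gamma_{2}(x''-y'')X\geq c>0$$
on $\supp\chi_{2}$ by the curvature hypothesis \eqref{e5.1}. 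The higher-order derivatives of $\Psi$ depend only on $\Gamma_{2}$ and are uniformly bounded on $\supp\chi_{2}$. Applying Lemma \ref{le4.2} in dimension $d-1$ with parameter $\lambda_{1}$ then produces the first bound.

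For the second bound I would split by the ratio $|\lambda_{1}|/|\lambda_{2}|$. Fix a sufficiently large constant $M>0$. If $|\lambda_{1}|\geq M|\lambda_{2}|$, the first bound already implies $\|T_{\lambda_{1},\lambda_{2}}g\|_{L^{2}}\leq C(1+|\lambda_{2}|)^{-(d-1)/2}\|g\|_{L^{2}}\leq C(1+|\lambda_{2}|)^{-1}\|g\|_{L^{2}}$, using $d\geq 3$. If instead $|\lambda_{1}|\leq M|\lambda_{2}|$, I would factor $\lambda_{2}$ from the phase to get $\phi=\lambda_{2}\widetilde{\Psi}$ with $\widetilde{\Psi}=-(\lambda_{1}/\lambda_{2})\Gamma_{2}(x''-y'')+x''^{T}E_{d-1}^{T}JE_{d-1}y''$, and apply Lemma \ref{le4.2} only in the first $k$ coordinates. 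The key observation is
$$F_{k}^{T}\partial^{2}_{x'',y''}\widetilde{\Psi}\,F_{k}=(\lambda_{1}/\lambda_{2})\,F_{k}^{T}\partial^{2}\Gamma_{2}(x''-y'')F_{k}+J_{k},$$
since $F_{k}^{T}E_{d-1}^{T}JE_{d-1}F_{k}=J_{k}$ is the $k\times k$ full-rank skew-symmetric block of $J$. By Lemma \ref{le3.1}(i) applied with $a=\lambda_{1}/\lambda_{2}\in[-M,M]$ and the positive semi-definite $G=F_{k}^{T}\partial^{2}\Gamma_{2}(x''-y'')F_{k}$ (which varies over a compact family as $x''-y''$ ranges over $\supp\chi_{2}$), this restricted Hessian has singular values bounded uniformly away from zero. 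Writing $y''=(y_{(1)},y_{(2)})\in\R^{k}\times\R^{d-1-k}$ and freezing $(x_{(2)},y_{(2)})$, Lemma \ref{le4.2} in dimension $k$ with parameter $\lambda_{2}$ gives a $(1+|\lambda_{2}|)^{-k/2}$ bound on each frozen slice; Minkowski and Fubini over the compact $(x_{(2)},y_{(2)})$-support of $\chi_{2}$ then upgrade this to the full $L^{2}(\R^{d-1})$ bound, and $k\geq 2$ delivers the second estimate.

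The main obstacle is precisely this sub-case $|\lambda_{1}|\leq M|\lambda_{2}|$: after factoring $\lambda_{2}$, the full $(d-1)\times(d-1)$ mixed Hessian of $\widetilde{\Psi}$ need not be non-degenerate, because $E_{d-1}^{T}JE_{d-1}$ has rank only $k<d-1$ in general, so a one-shot application of Lemma \ref{le4.2} in all coordinates is unavailable. Lemma \ref{le3.1}(i) is tailored to this situation: it isolates the $k\times k$ full-rank skew block $J_{k}$, on which the Hessian remains coercive uniformly in the parameter $\lambda_{1}/\lambda_{2}\in[-M,M]$, allowing partial oscillatory integral estimates to be combined via Fubini into the required global $L^{2}$ bound.
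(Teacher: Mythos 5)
Your proposal is correct and follows essentially the same route as the paper: the $(1+|\lambda_{1}|)^{-(d-1)/2}$ bound from the full $(d-1)$-dimensional mixed Hessian (whose skew part is killed by testing against $X^{T}HX$), a split according to the ratio $\lambda_{1}/\lambda_{2}$, and the $(1+|\lambda_{2}|)^{-1}$ bound from Lemma \ref{le4.2} applied in the first $k$ variables together with Fubini. The only cosmetic differences are that you interpolate two pure bounds at the end rather than deriving the interpolated bound within each case, and that you invoke Lemma \ref{le3.1}(i) in the regime $|\lambda_{1}|\le M|\lambda_{2}|$ whereas the paper arranges the split so that $|\lambda_{1}\lambda_{2}^{-1}|\le M^{-1}$ and a direct triangle inequality on $J_{k}$ suffices; both work (granting, as you note, uniformity of the constant over the compact family of matrices $G$), as does your implicit use of the standard localization and translation-invariance step needed to reduce to amplitudes compactly supported in each variable before applying Lemma \ref{le4.2}.
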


\begin{proof}
One can  rewrite
\begin{align*}
T_{\lambda_1, \lambda_2}g(x'')=\int_{\mathbb{R}^{d-1}} e^{-i\lambda_{1} \Psi_{\lambda_1, \lambda_2}(x'',y'')}\chi_{2}(x''-y'')g(y'')dy'',
\end{align*}
where the phase function $\Psi_{\lambda_1, \lambda_2}$ is given by
\begin{align}\label{e5.14}
\Psi_{\lambda_1, \lambda_2}(x'',y'')=\Gamma_2(x''-y'')-\lambda_{1}^{-1}\lambda_{2} x''^{T}E_{d-1}^{T}JE_{d-1}y''.
\end{align}
We have
$
\partial_{x'',y''}^{2}\Psi_{\lambda_1, \lambda_2}(x'',y'')=-\partial_{x''}^{2}\Gamma_2(x''-y'')-\lambda_{1}^{-1}\lambda_{2} E_{d-1}^{T}JE_{d-1}.
$
  Since $\partial_{x''}^{2}\Gamma_2(x''-y'')$ is  a positive definite matrix and $E_{d-1}^{T}JE_{d-1}$ is skew-symmetric, it follows that for all $|X|=1$,
\begin{align*}
|\partial_{x'',y''}^{2}\Psi_{\lambda_1, \lambda_2}(x'',y'')X|
&\geq |\lb \partial_{x''}^{2}\Gamma_2(x''-y'')X+\lambda_{1}^{-1}\lambda_{2} E_{d-1}^{T}JE_{d-1}X,X\rb|\nonumber\\
&=|\lb \partial_{x''}^{2}\Gamma_2(x''-y'')X,X\rb|\geq c
\end{align*}
for some $c>0$.
By \eqref{e5.14}, we have
\begin{align*}
|\partial^{\alpha}_{x'',y''}\Psi_{\lambda_1, \lambda_2}(x'',y'')|=|\partial^{\alpha}_{x'',y''}\Gamma_2(x''-y'')|\leq C_{\alpha}
\end{align*}
  for all $x''-y''\in \supp\chi_{2}$ and $|\alpha|\geq 3$.
Then by support condition of $\chi_{2}$ and Lemma \ref{le4.2},
\begin{align}\label{e5.15}
\|\chi_{2} T_{\lambda_1, \lambda_2} g\|_{L^{2}(\mathbb{R}^{d-1})}
\leq C(1+|\lambda_{1}|)^{-\frac{d-1}{2}}\|g\|_{L^{2}(\mathbb{R}^{d-1})}.
\end{align}
Observe that  $\det J_{k}\neq0$ and $x''-y''\in \supp \chi_{2}$.  This tells us that
\begin{align}\label{e5.16}
\inf\limits_{|X|=1}|J_{k}X|\geq c_{J_{k}} \textrm{\ \ \ \  and\ \ \ \ } \sup\limits_{|X|=1}|F_{k}^{T}\partial_{x''}^{2}\Gamma_2(x''-y'')F_{k}X|\leq C_{\Gamma_{2},k}
\end{align}
Take  $M= {2C_{\Gamma_{2},k}}/{c_{J_{k}}}$. Now we consider two cases: $M|\lambda_{1}|\geq |\lambda_{2}|$  and
$M|\lambda_{1}|\leq |\lambda_{2}|$.

 \medskip

 \noindent
 {\bf Case 1: $M|\lambda_{1}|\geq |\lambda_{2}|$}. In this case,  we use  \eqref{e5.15} to obtain
\begin{align}\label{e5.17}
\|\chi_{2} T_{\lambda_1, \lambda_2} g\|_{L^{2}(\mathbb{R}^{d-1})}
\leq C(1+|\lambda_{1}|)^{-(1-\theta)\frac{d-1}{2}}
(1+|\lambda_{2}|)^{-\theta}  \|g\|_{L^{2}(\mathbb{R}^{d-1})}.
\end{align}

\medskip

 \noindent
 {\bf Case 2: $M|\lambda_{1}|\leq |\lambda_{2}|$}. In this case,   we rewrite
 \begin{align*}
T_{\lambda_1, \lambda_2} g(x'')=\int_{\mathbb{R}^{d-1}} e^{-i\lambda_{2} \tilde{\Psi}_{\lambda_1, \lambda_2}(x'',y'')}\chi_{2}(x''-y'')g(y'')dy'',
\end{align*}
where $\tilde{\Psi}_{\lambda_1, \lambda_2}(x'',y'')=\lambda_{1}\lambda_{2}^{-1}\Gamma_2(x''-y'')- x''^{T}E_{d-1}^{T}JE_{d-1}y''$ and thus
\begin{align*}
\partial_{x'',y''}^{2}\tilde{\Psi}_{\lambda_1, \lambda_2}(x'',y'')=-\lambda_{1}\lambda_{2}^{-1}
\partial_{x''}^{2}\Gamma_2(x''-y'')- E_{d-1}^{T}JE_{d-1}.
\end{align*}
 By a triangle inequality and \eqref{e5.16},
\begin{align*}
|\partial_{(x_{1},...,x_{k}),(y_{1},...,y_{k})}^{2}\tilde{\Psi}_{\lambda_1, \lambda_2}(x'',y'')X|&=|-\lambda_{1}\lambda_{2}^{-1} F_{k}^{T}
\partial_{x''}^{2}\Gamma_2(x''-y'')F_{k}X-J_{k}X|\\
&\geq|J_{k}X|-M^{-1}|F_{k}^{T}\partial_{x''}^{2}\Gamma_2(x''-y'')F_{k}X|\\
&\geq c_{J_{k}}-M^{-1}c_{\Gamma_{2},k}\geq c_{J_{k}}/2
\end{align*}
for all $|X|=1$. Notice that  for all $|\alpha|\geq2$,
 $|\partial_{x'',y''}^{\alpha}\tilde{\Psi}_{\lambda_1, \lambda_2}(x'',y'')|\leq C_{\alpha}.
 $
We then apply Lemma \ref{le4.2} in the first $k$ ($k\geq2$) variables and Fubini's theorem to get
\begin{align*}
\|\chi_{2} T_{\lambda_1, \lambda_2}g\|_{L^{2}(\mathbb{R}^{d-1})}\leq C (1+|\lambda_{2}|)^{-1}\|g\|_{L^{2}(\mathbb{R}^{d-1})}.
\end{align*}
This, together with \eqref{e5.15}, tells us that for all $M|\lambda_{1}|\leq |\lambda_{2}|$
\begin{align}\label{e5.18}
\|\chi_{2} T_{\lambda_1, \lambda_2} g\|_{L^{2}(\mathbb{R}^{d-1})}\leq C(1+|\lambda_{1}|)^{-(1-\theta)\frac{d-1}{2}}
(1+|\lambda_{2}|)^{-\theta}\|g\|_{L^{2}(\mathbb{R}^{d-1})}.
\end{align}
Therefore, it follows \eqref{e5.17} and
  \eqref{e5.18} that  for all $\lambda_{1},\lambda_{2}\in \mathbb{R}$
\begin{align*}
\|\chi_{2} T_{\lambda_1, \lambda_2} g\|_{L^{2}(\mathbb{R}^{d-1})}\leq C(1+|\lambda_{1}|)^{-(1-\theta)\frac{d-1}{2}}
(1+|\lambda_{2}|)^{-\theta}\|g\|_{L^{2}(\mathbb{R}^{d-1})}.
\end{align*}
Then by a translation invariance argument similarly as in \cite[p. 236]{St2}, we have
\begin{align*}
\|T_{\lambda_1, \lambda_2} g\|_{L^{2}(\mathbb{R}^{d-1})}
\leq C(1+|\lambda_{1}|)^{-(1-\theta)\frac{d-1}{2}}
(1+|\lambda_{2}|)^{-\theta}\|g\|_{L^{2}(\mathbb{R}^{d-1})},
\end{align*}
which completes the proof of Lemma~\ref{le5.2}.
\end{proof}

\subsubsection{Proof of \eqref{e5.8} and \eqref{e5.9}}\label{sd37}

\begin{proof} [{\rm PROOF OF \eqref{e5.8}}]
By Plancherel's theorem, \eqref{e5.14-521}, Fubini's theorem and Lemma~\ref{le5.2}, we have
\begin{align*}
||f\ast K^{j,k}||_{L^{2}(\mathbb{R}^{d+1})}&=||\mathcal{F}_{d}\mathcal{F}_{u}(f\ast K^{j,k})||_{L^{2}(\mathbb{R}^{d+1})}\\
&=\big\|T_{\lambda_1, \lambda_2}\big(\mathcal{F}_{d}\mathcal{F}_{u}f(\cdot,\lambda_1, \lambda_2)\big)(x'')\big\|_{L^{2}_{\lambda_1,
\lambda_2}(\mathbb{R}^{2})L^{2}_{x''}(\mathbb{R}^{d-1})}\nonumber\\
&\leq C 2^{-(1-\theta)\frac{d-1}{2}j}2^{-\theta k}
||\mathcal{F}_{d}\mathcal{F}_{u}f(x'',\lambda_1, \lambda_2)||_{L^{2}_{\lambda_1, \lambda_2}(\mathbb{R}^{2})L^{2}_{x''}(\mathbb{R}^{d-1})}\\
&\leq C2^{-(1-\theta)\frac{d-1}{2}j}2^{-\theta k}||f||_{L^{2}(\mathbb{R}^{d+1})}
\end{align*}
for all $j,k\geq0 $ and $\theta\in[0,1]$.
\end{proof}

\begin{proof}[{\rm PROOF OF \eqref{e5.9}}]
By a scaling argument, the proof of \eqref{e5.9} reduces to show for all $n\leq0$,
\begin{align}\label{e5.20}
	\left\|f\ast  (K^{j,k}  )^{\ast} \ast K^{j,k}_{2^{n}}  \right\|_{L^{2}(\mathbb{R}^{d+1})}\leq C  2^{4j+4k}2^{n}\|f\|_{L^{2}(\mathbb{R}^{d+1})}
\end{align}
and \begin{align}\label{e5.21}
	\left\|f\ast  (K^{j,k}_{2^{n}})^{\ast} \ast K^{j,k} \right\|_{L^{2}(\mathbb{R}^{d+1})}\leq C  2^{4j+4k}2^{n}\|f\|_{L^{2}(\mathbb{R}^{d+1})}.
\end{align}

Let us prove estimate \eqref{e5.20}. We use integration by parts to see   that   for all $|\alpha|\leq 1 $ and $N>0,$
there exists a   constant $C=C(\alpha, N)>0$ such that
\begin{align}\label{e5.22}
|\partial_{x,u}^{\alpha}K^{j,k}(x,u)| + |\partial_{x,u}^{\alpha}(K^{j,k})^{\ast}(x,u)|
\leq C 2^{2j+2k}\chi_{2}(x'')\bigg(1+|x_{d}|+|u|\bigg)^{-N}.
\end{align}
Note that  $|x''^{T}E_{d-1}^{T}JE_{d-1}y''|\leq2c\|J\|\cdot|y''|\leq 2c^{2}\|J\|$ if $|x''|,|y''|\leq c$.
These, in combination the condition  $x^{T}Jy=x''^{T}E_{d-1}^{T}JE_{d-1}y''$ and the cancellation property \eqref{e5.4} of the kernel of $K^{j,k}$, yield
\begin{eqnarray*}
|(\Kone)^{\ast}\ast K^{j,k}_{2^{n}}(x,u)|
 &=&\big|\int_{\mathbb R^{d+1}} (\Kone)^{\ast}(x-y,u-v-x^{T}Jy)\Kone_{2^{n}}(y,v) dydv\big|\nonumber\\
&=&\big|\int_{\mathbb R^{d+1}} \bigg((\Kone)^{\ast}(x-y,u-v-x''^{T}E_{d-1}^{T}JE_{d-1}y'')-(\Kone)^{\ast}(x,u)\bigg) K^{j,k}_{2^{n}}(y,v) dydv\big|\nonumber\\
 &\leq&
 \int_{0}^{1}\int_{\R^{d+1}}2^{2j+2k}\bigg(1+|x-\theta y|+|u-\theta v|\bigg)^{-N}
\bigg(|y|+|v|\bigg)
 \big|K^{j,k}_{2^{n}}(y,v)\big|dydvd\theta.
\end{eqnarray*}
This gives that for $n\leq 0$,
\begin{eqnarray*}
 \int_{\mathbb R^{d+1}}|(\Kone)^{\ast}\ast K^{j,k}_{2^{n}}(x,u)|dxdu
&\leq& C 2^{2j+2k}\int_{\mathbb R^{d+1}}\big(|y|+|v|\big)\big|K^{j,k}_{2^{n}}(y,v)\big|dydv\nonumber\\
&\leq& C^{2}2^{4j+4k}2^{n}.
\end{eqnarray*}
By the Schur lemma, \eqref{e5.20} follows readily.

For \eqref{e5.21}, we use the cancellation property of the kernel of $(K^{j,k}_{2^{n}})^\ast$
and a similar argument as above to show that
$$
\int_{\mathbb R^{d+1}} |(K^{j,k}_{2^{n}})^{\ast}\ast K^{j,k}(x,u)  |dxdu\leq C 2^{4j+4k}2^{n}.
$$
This, together with the Schur lemma,  gives the desired estimate \eqref{e5.21}.
\end{proof}

\subsubsection{Proof of \eqref{e5.10} and \eqref{e5.11}}\label{sd18}

\begin{proof} [{\rm PROOF OF \eqref{e5.10}}]
  By \eqref{e5.2} and a change of variables in $\sigma,\tau$, we have
\begin{align}\label{e5.23}
K^{j,k}_{s}(x,u)
&=s^{-d+1}\chi_{2}(s^{-1}x'')\iint_{\R\times\R} e^{i\sigma(x_{d}-s\Gamma_2(s^{-1}x''))}e^{i\tau u}
\phi_{j}(s\sigma)\phi_{k}(s^{2}\tau)d\sigma d\tau.
\end{align}
Note that $s\frac{d}{ds}K^{j,k}_s(x,u)$ can be written as the finite sum of following terms,
\begin{eqnarray}\label{e5.24}
&&\hspace{-1cm} s\frac{d}{ds}K^{j,k}_s(x,u)=(K^{(1)})^{j,k}_s+ (K^{(2)})^{j,k}_s + 2^{j}(K^{(3)})^{j,k}_s
\end{eqnarray}
where
\begin{eqnarray*}
(K^{(1)})^{j,k}_s&=&\left[(-d+1)\chi_{2}(s^{-1}x'')-  s^{-1}x''\chi_{2}'(s^{-1}x'') \right]
s^{-d+1} \iint_{\R\times\R} e^{i\sigma(x_{d}-s\Gamma_2(s^{-1}x''))}e^{i\tau u}
\phi_{j}(s\sigma)\phi_{k}(s^{2}
\tau)d\sigma d\tau\\
 (K^{(2)})^{j,k}_s&=&
s^{-d+1}\chi_{2}(s^{-1}x'')\iint_{\R\times\R} e^{i\sigma(x_{d}-s\Gamma_2(s^{-1}x''))}e^{i\tau u}\left[2^{-j}s\sigma
\phi'_{j}(s\sigma)\phi_{k}(s^{2}\tau) + \phi_{j}(s\sigma)2^{-k}s\sigma\phi'_{k}(s^{2}
\tau)\right]d\sigma d\tau\\
(K^{(3)})^{j,k}_s &=&  \left[ -is^{-d+1}\Gamma_2(s^{-1}x'')  +i s^{-1}x''\Gamma_2'(s^{-1}x'') \right]
\chi_{2}(s^{-1}x'')\iint_{\R\times\R} e^{i\sigma(x_{d}-s\Gamma_2(s^{-1}x''))}e^{i\tau u}
2^{-j}s\sigma\phi_{j}(s\sigma)\phi_{k}(s^{2}
\tau)d\sigma d\tau.
\end{eqnarray*}
The method as in the proof of \eqref{e5.8} yields that  for all $i=1, 2, 3$, we have
\begin{align*}
\|f\ast (K^{i})^{j,k}\|_{L^{2}(\mathbb{R}^{d+1})}\leq C 2^{-(1-\theta)\frac{d-1}{2}j}2^{-\theta k}||f||_{L^{2}(\mathbb{R}^{d+1})}, \ \ \ \ \forall
 \theta\in[0,1].
 \end{align*}
These, together with a scaling argument, imply
\begin{align*}
\sup_{t\in[1,2]}\left\|f\ast \left[s\frac{d}{ds}K^{j,k}_s\right]_{s=2^{n}t}\right\|_{L^{2}(\mathbb{R}^{d+1})}
&\leq2^{j}\sum_{i=1}^{m}\sup_{t\in[1,2]}\big\| f\ast (K^{i})^{j,k}_{2^{n}t}\big\|_{L^{2}(\mathbb{R}^{d+1})}\nonumber\\
&\leq  C2^{j}2^{-(1-\theta)\frac{d-1}{2}j}2^{-\theta k}
\|f\|_{L^{2}(\mathbb{R}^{d+1})}\nonumber\\
&=C2^{-(\frac{d-3}{2}-\frac{d-1}{2}\theta)j}2^{-\theta k}||f||_{L^{2}(\mathbb{R}^{d+1})}.
\end{align*}
Estimate \eqref{e5.10} follows readily.
\end{proof}

\begin{proof} [{\rm PROOF OF \eqref{e5.11}}]   By \eqref{e5.24},
	we can follow the proof of \eqref{e4.16} to obtain
\begin{align*}
\left\|f\ast \left(\left[s\frac{d}{ds}K^{j,k}_s\right]_{s=2^{n}t}\right)^{\ast}
\ast \left[s\frac{d}{ds}K^{j,k}_s\right]_{s=2^{n'}t} \right\|_{L^{2}(\mathbb{R}^{d+1})}
&\leq2^{2j}\sum_{i,i'=1}^{m}\big\|f\ast \big((K^{i})^{j,k}_{2^{n}t}\big)^{\ast}\ast(K^{i'})^{j,k}_{2^{n'}t}
\big\|_{L^{2}(\mathbb{R}^{d+1})}\nonumber\\
&\leq C^{2}2^{6j+6k}2^{-|n-n'|}||f||_{L^{2}(\mathbb{R}^{d+1})}.
\end{align*}
This completes the proof of \eqref{e5.11}.
 \end{proof}

\subsection{ Weak type $(1,1)$-estimate }
For $p=1,$ we have the following result.
\begin{lemma}\label{le5.3}
Let  $j,k\geq 0$. For all $\alpha>0$, we have
\begin{align*}
\big| \big\{(x, u): \sup_{t>0}|f\ast K^{j,k}_{t}| > \alpha \big\} \big| \leq C (j+k)2^{j} \alpha^{-1} ||f||_{L^{1}(\mathbb{R}^{d+1})}.
\end{align*}
\end{lemma}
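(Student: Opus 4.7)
The strategy mirrors that of Lemma~\ref{le4.4}. Since the $L^{2}$ bound for $\sup_{t>0}|f\ast K^{j,k}_{t}|$ has already been established in Lemma~\ref{le5.1}, by the Calder\'on--Zygmund theory on the space of homogeneous type $(\R^{d+1},\rho,dxdu)$ with quasi-norm $\rho(x,u)=|x|+|u|^{1/2}$, it suffices to verify the H\"ormander-type condition
\[
\int_{\rho(x,u)\geq 2c_{J}\rho(y,v)}\sup_{s>0}\big|K^{j,k}_{s}\big((y,v)^{-1}(x,u)\big)-K^{j,k}_{s}(x,u)\big|\,dxdu\leq C(j+k)2^{j},
\]
where $c_{J}=\tfrac{3}{2}\|J\|^{1/2}+1$ is the quasi-triangle constant, exactly as in \eqref{e4.35}. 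A scaling argument reduces this in turn to proving, uniformly over $\rho(y,v)\leq 2^{-n}\delta$ and $\delta>0$,
\[
\sum_{n\in\ZZ}\int_{\rho(x,u)\geq 2c_{J}2^{-n}\delta}\sup_{s\in[1,2]}\big|K^{j,k}_{s}\big((y,v)^{-1}(x,u)\big)-K^{j,k}_{s}(x,u)\big|\,dxdu\leq C(j+k)2^{j}.
\]

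First I would record the pointwise estimates for the kernel coming from \eqref{e5.2}, \eqref{e5.23}, and \eqref{e5.24} via repeated integration by parts in $\sigma$ and $\tau$: for $N$ large,
\[
|K^{j,k}(x,u)|\leq C 2^{j}2^{k}\big(1+|x|+|u|^{1/2}\big)^{-N/2}\big(1+2^{j}|x_{d}-\Gamma_{2}(x'')|\big)^{-N/2}\big(1+2^{k/2}|u|^{1/2}\big)^{-N/2},
\]
together with $\sup_{s\in[1,2]}|\nabla_{x,u}K^{j,k}_{s}(x,u)|\leq C 2^{2j+2k}(1+|x|+|u|^{1/2})^{-N/2}$ and the analogue of \eqref{e4.38} for $\tfrac{d}{ds}K^{j,k}_{s}$. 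These are the exact counterparts of \eqref{e4.37}--\eqref{e4.38} adapted to the case-(2) defining equation $x_{d}=\Gamma_{2}(x'')$, with $|x_{d}-\Gamma_{2}(x'')|$ playing the role of $|x_{1}-\Gamma_{1}(x')|$.

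Next I would split the sum into the two ranges $2^{-n}\delta\geq 1$ and $2^{-n}\delta\leq 1$. In the far range $2^{-n}\delta\geq 1$, I bound each summand by $|K^{j,k}_{s}((y,v)^{-1}(x,u))|+|K^{j,k}_{s}(x,u)|$, shift via the translation argument of \eqref{e4.39-520}, and apply the fundamental theorem of calculus in $s$ together with the $\tfrac{d}{ds}K^{j,k}_{s}$ bound to obtain rapid decay $C(2^{-n}\delta)^{-N/2}2^{j}$, summable to $C 2^{j}$. In the close range $2^{-n}\delta\leq 1$, the support condition on $\chi_{2}$ forces $|x''|\leq C$ and $|y''|\leq 1$, so
\[
|(y,v+y^{T}Jx)|\leq C\rho(y,v),
\]
where I use $x^{T}Jy=x''^{T}E^{T}_{d-1}JE_{d-1}y''$. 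I would then further split into the very-close subrange $2^{-n}\delta\leq 2^{-2j-2k}$ and the intermediate subrange $2^{-2j-2k}\leq 2^{-n}\delta\leq 1$. In the very-close subrange the gradient bound combined with the mean value theorem along the curve $\theta\mapsto(\theta y,\theta v)^{-1}(x,u)$ gives
\[
\sup_{s\in[1,2]}\big|K^{j,k}_{s}\big((y,v)^{-1}(x,u)\big)-K^{j,k}_{s}(x,u)\big|\leq C 2^{2j+2k}2^{-n}\delta\,(1+\rho(x,u))^{-N/2},
\]
whose $dxdu$ integral sums geometrically to an $O(1)$ bound. In the intermediate subrange the trivial pointwise kernel bound after integration in $(x,u)$ gives $O(2^{j})$ per term, and there are only $O(j+k)$ such values of $n$, producing the stated $C(j+k)2^{j}$.

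The main obstacle is keeping track of the cancellation structure in the close range; unlike the first proof there is no smoothness in $x_{d}$ of the kernel at scale independent of $j$, so one must carefully commute the quasi-triangle inequality through the group translation $(y,v)^{-1}(x,u)=(x-y,u-v-x^{T}Jy)$ and exploit that $x^{T}Jy=x''^{T}E^{T}_{d-1}JE_{d-1}y''$ depends only on the first $d-1$ coordinates, so that the twisting is harmless on the support of $\chi_{2}(x'')$. Once this is handled, the $j+k$ logarithmic loss arises precisely (and only) from the number of dyadic scales in the intermediate subrange, exactly as in the proof of Lemma~\ref{le4.4}.
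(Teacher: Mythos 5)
Your proposal is correct and follows essentially the same route as the paper's proof: the same reduction via the H\"ormander condition and scaling, the same kernel estimates (the case-(2) analogues of \eqref{e4.37}--\eqref{e4.38}, namely \eqref{e5.26}--\eqref{e5.27}), the same split into the ranges $2^{-n}\delta\geq 1$, $2^{-n}\delta\leq 2^{-2j-2k}$, and the intermediate range contributing the $O(j+k)$ factor. No substantive differences from the argument given in the paper.
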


\begin{proof} The proof of Lemma~\ref{le5.3} is essential similar to that of Lemma~\ref{le4.4}.
Let  $\rho(x,u):=|x|+|u|^{1/2}$ and $c_{J}=\frac{3}{2}\|J\|^{1/2}+1$.   Like Lemma~\ref{le4.4},
by a scaling argument, it suffices to show for all $\rho(y,v)\leq 2^{-n}\delta$ and $\delta>0$,
\begin{align}\label{e5.25}
\sum_{n\in \mathbb{Z}}\int_{\rho(x,u)\geq 2c_{J}2^{-n}\delta}\sup_{s\in[1,2]}
\big|K^{j,k}_{s}\big((y,v)^{-1}(x,u)\big)-K^{j,k}_{s}(x,u)\big|dxdu\leq C (j+k)2^{j}.
\end{align}

To show \eqref{e5.25}, we first note that from \eqref{e5.2}, we have that for any $j, k\geq 0,$
\begin{align}\label{e5.26}
\big| K^{j,k}(x,u)\big|
\leq C2^{j+k}(1+|x|+|u|^{1/2})^{-N/2} \bigg(1+2^{j}|s^{-1}x_{d}-\Gamma_2(x'')|\bigg)^{-N/2}\big(1+2^{k/2}|u|^{1/2}\big)^{-N/2},
\end{align}
\begin{align}\label{7-26-e5.27}
\sup_{s\in[1,2]}\big|\nabla_{x,u}K^{j,k}_{s}(x,u)\big|
\leq C2^{2j+2k}(1+|x|+|u|^{1/2})^{-N/2}
\end{align}
for  large  $N\geq 1$; and by \eqref{e5.24},
\begin{align}\label{e5.27}
\big|\frac{d}{ds}K^{j,k}_s(x,u)\big|
\leq C2^{2j}2^{k}(1+|x|+|u|^{1/2})^{-N/2} \bigg(1+2^{j}|s^{-1}x_{d}-\Gamma_2(s^{-1}x'')|\bigg)^{-N/2}\big(1+2^{k/2}|u|^{1/2}\big)^{-N/2}.
\end{align}
 uniformly in $s\in[1,2]$.

In the case  $2^{-n}\delta\geq 1$, it follows by  \eqref{e5.26}, \eqref{e5.27} and \eqref{e4.35} that
\begin{align*}
\sum_{2^{-n}\delta\geq1}\int_{\rho(x,u)\geq 2c_{J} 2^{-n}\delta}\sup_{s\in[1,2]}
\big|K^{j,k}_{s}\big((y,v)^{-1}(x,u)\big)-K^{j,k}_{s}(x,u)
\big|dxdu\leq C2^{j}.
\end{align*}
It remains to show for all $\rho(y,v)\leq 2^{-n}\delta$,
\begin{align}\label{e5.28}
\sum_{2^{-n}\delta\leq1}\int_{\rho(x,u)\geq 2c_{J} 2^{-n}\delta}\sup_{s\in[1,2]}
\big|K^{j,k}_{s}\big((y,v)^{-1}(x,u)\big)-K^{j,k}_{s}(x,u)\big|dxdu\leq C (j+k)2^{j}.
\end{align}
By the compact $x''$-support of $K^{j,k}$ and $\rho(y,v)\leq 2^{-n}\delta\leq 1$, we can suppose $|x''|\leq C+1$ and
\begin{align}\label{e5.29}
|(y,v+y^{T}Jx)|&=|(y,v+y''^{T}E_{d-1}^{T}JE_{d-1}x'')|\leq C\rho(y,v).
\end{align}
For all $\theta\in[0,1]$, we have $$\rho(x,u)\geq 2c_{J}\rho(y,v)\geq 2c_{J}\rho(\theta y,\theta v),$$
which, together with \eqref{e4.35}, tells us that
\begin{align*}
\rho((\theta y,\theta v)^{-1}(x,u))\geq (2c_{J})^{-1}\rho(x,u).
\end{align*}
This, together with Newton-Leibniz formula, \eqref{7-26-e5.27} and \eqref{e5.29}, implies that
\begin{align*}
\sup_{s\in[1,2]}\big|K^{j,k}_{s}\big((y,v)^{-1}(x,u)\big)-K^{j,k}_{s}(x,u)
\big|\leq C 2^{2j+2k}\rho(y,v)\bigg(1+\rho(x,u)\bigg)^{-N/2},
\end{align*}
which implies that for all $\rho(y,v)\leq 2^{-n}\delta$,
\begin{align}\label{e5.30}
&\sum_{2^{-n}\delta\leq2^{-2j-2k}}\int_{\rho(x,u)\geq 2c_{J}2^{-n}\delta}\sup_{s\in[1,2]}
\big|K^{j,k}_{s}\big((y,v)^{-1}(x,u)\big)-K^{j,k}_{s}(x,u)\big|dxdu\nonumber\\
&\leq C \sum_{2^{-n}\delta\leq2^{-2j-2k}}2^{2j+2k}2^{-n}\delta\leq C.
\end{align}
On another hand, we apply  \eqref{e5.26} and \eqref{e5.27} to get
\begin{align}\label{e5.31}
&\sum_{2^{-2j-2k}\leq2^{-n}\delta\leq1}\int_{\rho(x,u)\geq 2c_{J}2^{-n}\delta}
\sup_{s\in[1,2]}\big|K^{j,k}_{s}\big((y,v)^{-1}(x,u)\big)-K^{j,k}_{s}(x,u)\big|dxdu\nonumber\\
&\leq C \sum_{2^{-2j-2k}\leq2^{-n}\delta\leq1}2^{j}\leq C (j+k)2^{j},
\end{align}
where in the last inequality we use the fact that the summation has only
$O(j+k)$ terms. Estimate \eqref{e5.28} then follows from \eqref{e5.30} and \eqref{e5.31}.
Hence, we have proved \eqref{e5.25}.
\end{proof}

\subsection{ Proof of (ii) of Theorem~\ref{th2.1} }
\begin{proof}[Proof of (ii) of Theorem~\ref{th2.1}]

First we consider $1<p\leq 2$. By \eqref{e5.3}, we have
$$\sup_{t>0}|f\ast (\mu_2)_{t}|\leq C\sum_{j=0}^{\infty} \sum_{k=0}^{\infty}\sup_{t>0}\big||f|\ast K^{j,k}_{t}\big|.$$

From \eqref{e55}, it suffices to consider that  $j+k\geq1$.
By interpolation, it follows from Lemmas  \ref{le5.1} and \ref{le5.3} that  for all $\theta\in[0,1]$,
$$
\bigg\|\sup_{t>0}\big||f|\ast (K^{j,k})_{t}\big|\bigg\|_{L^{p}(\R^{d+1})}\leq C 2^{[ (\frac{2}{p}-1)
 -(d-2)(1-\frac{1}{p})]j}  2^{ {d-1\over 2} \theta j}2^{-2 (1-\frac{1}{p})\theta k}\|f\|_{L^{p}(\R^{d+1})}.
$$
As a result,   the summation in $j,k$ converges whenever  $ d/(d-1)<p\leq 2$ with  $d\geq 3$ and $\theta$ sufficiently small.
By interpolation with a trivial $L^{\infty}$ estimate, we obtain  estimate
 (ii) of Theorem~\ref{th2.1}.
\end{proof}

\bigskip

 \noindent
{\bf Acknowledgments.}
The authors  thank Shaoming Guo   for helpful  suggestions and
discussions.
 L.X. Yan was supported by the NNSF of China, Grant
Nos.  11871480 and  12171489.


\bigskip

\bibliographystyle{amsplain}
 
\end{document}